\newtheorem{theorem}{Theorem}[section]
\newtheorem{prop}[theorem]{Proposition}
\newtheorem{cor}[theorem]{Corollary}
\newtheorem{lemma}[theorem]{Lemma}
\theoremstyle{definition}
\newtheorem*{claim*}{Claim}
\newtheorem*{problem*}{Problem}
 \newcommand{\R}{\mathbb{R}}
\newcommand{\p}{\partial}
\newcommand{\lp}{\Delta}
\newcommand{\ra}{\rightarrow}
\DeclareMathOperator{\diam}{diam}
\DeclareMathOperator{\vol}{vol}
\begin{document}
\title[Effective Volume Growth]{Effective Volume Growth of Three-Manifolds with Positive Scalar Curvature}
\author{Yipeng Wang}
\address{Columbia University \\ 2990 Broadway \\ New York NY 10027 \\ USA}
\begin{abstract}
    In this note, we prove an effective linear volume growth for complete three-manifolds with non-negative Ricci curvature and uniformly positive scalar curvature. This recovers the results obtained by Munteanu-Wang \cite{munteanu2022geometry}. Our method builds upon recent work by Chodosh-Li-Stryker \cite{CLS}, which utilizes the technique of $\mu$-bubbles and the almost-splitting theorem by Cheeger-Colding.
\end{abstract}

\maketitle
\section{Introduction}
The Bishop-Gromov volume comparison theorem asserts that a complete Riemannian manifold $M^n$ with non-negative Ricci curvature exhibits at most Euclidean volume growth: specifically, there exists a universal constant $C(n)$ such that $\vol(B_r(p))\le C r^n$ for any point $p\in M$ and all $r>0$. A well known conjecture of Gromov \cite{Gromov-Lecture} proposes that if $M^n$ additionally possesses uniformly positive scalar curvature $R\ge 1$, then there should be a universal constant $C(n)$ such that $\vol(B_r(p))\le Cr^{n-2}$.

In this note, we explore Gromov's conjecture within the three-dimensional setting to establish an effective linear volume growth result. It is important to mention that, according to Yau's linear volume growth theorem for manifolds with non-negative Ricci curvature \cite{Yau-linear}, Gromov's conjecture would provide a precise characterization of volume growth for three-manifolds with $\text{\rm Ric}_g\ge 0$ and $R\ge 1$.
\begin{theorem}{\label{thm:effective}}
    There exists a universal constant $C$ such that the following statement is true. Let $(M,g)$ be a complete Riemannian 3-manifold with $\text{\rm Ric}_g\ge 0$. For all $p\in M$ and $r>0$, if $R_g\ge 1$ in $B_r(p)$, then 
    $$\vol(B_r(p))\le Cr$$
\end{theorem}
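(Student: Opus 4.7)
The plan is to prove the linear bound by contradiction, combining the $\mu$-bubble construction of Chodosh-Li-Stryker with the almost-splitting theorem of Cheeger-Colding, exactly as the abstract advertises. Suppose, for some large universal constant $C$ to be fixed, that $\vol(B_r(p)) > Cr$. I will produce a family of universally controlled $\mu$-bubble surfaces slicing $B_r(p)$, and then show that each resulting slab carries only a universally bounded volume, contradicting the assumption for $C$ large enough.

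First, I would carry out the warped $\mu$-bubble construction on annuli inside $B_r(p)$. For each integer $k$ with $1 \leq k \leq r-1$, I pick prescribing data supported in the region $B_{k+1}(p)\setminus B_{k-1}(p)\subset B_r(p)$ so that a minimizer $\Sigma_k$ of the corresponding prescribed-mean-curvature functional exists and is a closed stable surface. The first and second variation computations, together with the Gauss equation and the hypothesis $R_g \geq 1$ in $B_r(p)$, give a stability inequality which, upon integrating against a suitable test function and applying Gauss-Bonnet, forces $\chi(\Sigma_k) > 0$. Hence each $\Sigma_k$ is a topological $2$-sphere, and the choice of warping function (the main technical input carried over from \cite{CLS}) yields universal bounds $\mathrm{area}(\Sigma_k)\leq A_0$ and intrinsic diameter $\mathrm{diam}(\Sigma_k)\leq D_0$.

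Second, I would bound the volume of each slab $U_k$ bounded by $\Sigma_k$ and $\Sigma_{k+1}$. Suppose, toward a contradiction, $\vol(U_k) \geq V_0$ for $V_0$ very large. By Bishop-Gromov applied within $B_r(p)$, a large slab volume forces the existence of a geodesic ball $B_\rho(q)\subset U_k$ of fixed radius $\rho$ whose normalized volume ratio $\vol(B_\rho(q))/\rho^3$ is close to the Euclidean value on a long range of scales. Cheeger-Colding's volume-cone-implies-metric-cone and almost-splitting theorems then produce, on $B_{\rho/2}(q)$, a Gromov-Hausdorff approximation to a product $I\times N^2$, with a "splitting direction" coming from a near-line in $M$. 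Since the boundary surfaces $\Sigma_k$ and $\Sigma_{k+1}$ separate $B_r(p)$ and each have intrinsic diameter $\leq D_0$, the near-product structure forces the transverse factor $N^2$ to have diameter controlled by $D_0$ as well, hence $\vol(U_k)$ to be bounded by a universal constant, contradicting $V_0 \gg 1$.

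Summing $\vol(U_k) \leq V_0$ over $k=0,\ldots,\lfloor r\rfloor$ and adding the volume of the central bounded piece $B_1(p)$ gives $\vol(B_r(p))\leq V_0\, r+O(1)$, which is the desired universal linear bound. The most delicate step I expect is the second one: one needs to orient the near-splitting produced by Cheeger-Colding so that its product direction is \emph{transverse} to the separating surfaces $\Sigma_k,\Sigma_{k+1}$, rather than parallel to them, in order to transfer the intrinsic diameter bound on the $\mu$-bubbles into a diameter bound on $N^2$. Matching the quantitative almost-splitting error with the quantitative $\mu$-bubble diameter estimate—both only approximate and both depending on auxiliary warping choices—will be the technical heart of the proof.
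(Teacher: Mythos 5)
Your proposal follows the paper's general strategy in spirit (combine $\mu$-bubbles with Cheeger-Colding almost-splitting), but the logical skeleton is quite different and, as written, contains a step that fails.

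The critical gap is in your second step. You claim that if a slab $U_k$ has large volume, Bishop-Gromov produces a ball $B_\rho(q)\subset U_k$ of fixed radius whose volume ratio is nearly Euclidean, and that Cheeger-Colding's volume-cone theorem then gives a product approximation $I\times N^2$. This does not follow. Bishop-Gromov gives only an \emph{upper} bound on volume ratios; a large total volume spread over a geometrically large region gives no lower bound on the ratio for any single ball. Indeed, the whole point of the problem is to rule out slabs that are very ``spread out'' with small volume density everywhere (think of a long thin cylinder $S^1_\epsilon\times\mathbb{R}$: every ball has tiny volume ratio, yet the slab volume is unbounded), and your deduction assumes away exactly that scenario. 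Moreover, even where near-Euclidean volume ratio \emph{does} hold, the conclusion is that the ball is Gromov-Hausdorff close to a Euclidean ball, not to a thin product $I\times N^2$ — the opposite of what you want.

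The almost-splitting used in the paper (and in Chodosh-Li-Stryker) is of a different kind: it is applied at a point that is a midpoint of a long minimizing geodesic, which under $\mathrm{Ric}\ge 0$ gives a splitting in the geodesic's direction via the excess estimate, with no volume hypothesis at all. The paper then shows (Lemma~\ref{lem:CLS-step-1}) that the transverse level set $\Gamma_k$ has diameter $\le 3c$ near $p_k$, by combining the Pythagorean estimate with the fact that $\Sigma_k$ separates $M$ (after passing to the universal cover — a reduction your proposal omits but which is essential for the intersection/separation argument). That diameter bound yields $B_S(p_k)\subset N_{4c}(\gamma)$, and the final volume bound is \emph{not} by contradiction; it is a direct covering of $B_{r/16}(p)$ by tubular neighborhoods of two geodesics plus two balls of fixed radius, each of linear volume. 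You also gloss over the fact that the $\mu$-bubble $\tilde\Sigma_k$ typically has many components, only each \emph{individual} component having bounded diameter; the paper must carefully single out the component meeting $\gamma$, whereas your slab decomposition implicitly treats $\Sigma_k$ as a single bounded-diameter sphere.

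In short: your step two, as stated, is false, and the ``technical heart'' you flag (transferring the $\mu$-bubble diameter bound to the transverse factor) is indeed nontrivial but requires the separation-plus-Pythagoras argument of Lemma~\ref{lem:CLS-step-1}, not a volume-density argument.
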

The significant work on Theorem \ref{thm:effective} was initially conducted by Munteanu and Wang \cite{munteanu2022geometry}, who assumed $R_g\ge 1$ throughout the entire manifold $M$. Their analysis focused on the level sets of harmonic functions. Subsequently, Chodosh, Li, and Stryker \cite{CLS} provided an alternative approach for cases where $M$ is non-compact. Employing the technique of of $\mu$-bubbles along with the Cheeger-Colding almost splitting theorem, they have demonstrated that
\[
{\vol(B_r(p))}\le C(p,M,g)r
\]
for all $p\in M$ and $r>0$, also assuming $R_g\ge 1$ over $M$, where the constant may depend on the manifold. In this context, Theorem \ref{thm:effective} can be considered as an effective and localized version of the main results considered in \cite{munteanu2022geometry} and \cite{CLS}. 

After this paper was written, we learned of recent contributions by Wei, Xu and Zhang \cite{wei-xu-zhang}, as well as by Antonelli and Xu \cite{antonelli-xu}, who independently provided asymptotically sharp estimate. We refer some other related works for this problem in higher dimensions \cite{wang2023positive},\cite{Zhu+2022+225+246}.

\subsection*{Acknowledgement} The author wishes to express sincere gratitude to his advisor, Simon Brendle, for inspiring discussions and continuing support. Additionally, the author is thankful to Chao Li for explaining the ideas presented in \cite{CLS}.
\section{Main Ingredients of the Proof}
Now let us describe the main techniques employed in the proof of Theorem \ref{thm:effective}. The method we use is similar to that in \cite{CLS}. The Ricci curvature condition enables the application of the Cheeger-Colding almost splitting theorem \cite{Cheeger-Colding}, which ensures that geodesic balls up to a certain scale, centered at the midpoint of a long geodesic, is Gromov-Hausdorff close to a ball with same radius in $N\times \R$, where $N$ as a length space can be constructed as the level set of certain harmonic function that is closed to the distance function. Additionally, the scalar curvature condition allows us to construct a specific surface, $\Sigma$ (called the $\mu$-bubble) around $N$, so that each connected component of $\Sigma$ maintains a uniform diameter bound. We should note that $\Sigma$ generally possesses numerous connected components, but a more careful analysis of the Cheeger-Colding estimates ensures that $N$ is closed to one specific component of $\Sigma$, therefore having a uniform diameter bound.

By considering the universal cover of $M$, we could assume that $M$ is simply connected. The first key ingredient in our analysis is the geometric estimates of $\mu$-bubbles, which we outline in the following Lemma. The concept of $\mu$-bubble was initially introduced by Gromov, and we refer to \cite{four-lectures} for a general introduction to this technique. 

Throughout this note, we will write
\[
N_R(\Gamma):=\{x:d(x,\Gamma)<R\}
\]
to denote the tubular neighborhood of a given closed subset $\Gamma$ and
\[
\beta_{a,b}:=\beta([a,b])\subset M
\]
for any unit speed minimizing geodesic $\beta$ and $0\le a <b<\infty$.
\begin{lemma}[Chodosh-Li \cite{aspherical},\cite{CLS}, Chodosh-Li-Stryker 
\cite{chodosh2022complete}]{\label{lem:mu-bubble}}
    There exists constants $L$ and $c$ such that the following is true:

Let $(X^3,g)$ be a 3-manifold with boundary. If there exists some $p\in X$ such that $d_X(p,\p X)>L$ and $R_g\ge 1$ in $N_{L}(\p X)$, then there exists an open subsets $\Omega\subset N_{\frac{L}{2}}(\p X)\cap X$ and a smooth surface $\Sigma$ with $\p\Omega= \Sigma\sqcup \p X$ and each connected component of $\Sigma$ has diameter bounded by $c$.
\end{lemma}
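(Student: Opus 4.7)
The plan is to realize $\Sigma$ as the free boundary of a minimizer of a $\mu$-bubble functional in the annular region $\{0<\rho<L/2\}\cap X$, where $\rho$ is a smoothing of $d(\cdot,\p X)$. I would fix a smooth strictly decreasing prescribing function $h\colon(0,L/2)\to\R$ with $h\to+\infty$ as $\rho\to 0^+$ and $h\to-\infty$ as $\rho\to (L/2)^-$; an explicit choice is $h(\rho)=\tfrac{4b}{3}\cot(b\rho)$ with $L=2\pi/b$, for which one checks the Riccati identity $h'(\rho)+\tfrac34 h(\rho)^2\equiv -\tfrac{4b^2}{3}$. Setting $\Omega_0:=\{\rho<L/4\}\cap X$, I would minimize
\[
\cA(\Omega') := \cH^2(\p^*\Omega'\cap \mathrm{int}(X)) - \int_{\Omega'} h\, dV
\]
over Caccioppoli sets $\Omega'$ coinciding with $\Omega_0$ off a compact subset of $\{0<\rho<L/2\}$. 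Standard geometric measure theory, as developed in \cite{aspherical,CLS,chodosh2022complete}, then produces a smooth minimizer $\Omega\subset N_{L/2}(\p X)\cap X$; the singularities of $h$ at the endpoints serve as barriers preventing the free boundary $\Sigma:=\p\Omega\setminus\p X$ from escaping the open annulus.

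Next I would derive the stability inequality. The first variation gives $H_\Sigma=h$ on $\Sigma$, and the second variation yields
\[
\int_\Sigma |\nabla\phi|^2 \ge \int_\Sigma \bigl(|A|^2+\Ric(\nu,\nu)+\p_\nu h\bigr)\,\phi^2
\]
for all $\phi\in C^\infty(\Sigma)$. Combining this with the traced Gauss equation
\[
|A|^2+\Ric(\nu,\nu)=\tfrac12 R_g-K_\Sigma+\tfrac12(|A|^2+h^2),
\]
the bound $|A|^2\ge \tfrac12 h^2$, the hypothesis $R_g\ge 1$ on $N_L(\p X)$, and the pointwise estimate $\p_\nu h\ge h'(\rho)$ (which holds because $h'<0$ and $|\langle\nabla\rho,\nu\rangle|\le 1$), one obtains
\[
\int_\Sigma\bigl(|\nabla\phi|^2+K_\Sigma\phi^2\bigr) \ge \int_\Sigma \Bigl(\tfrac12+\tfrac34 h^2+h'(\rho)\Bigr)\phi^2 = \Bigl(\tfrac12-\tfrac{4b^2}{3}\Bigr)\int_\Sigma\phi^2.
\]
Fixing any $b<\sqrt{3/8}$ (equivalently $L>4\pi\sqrt{2/3}$) makes $c_0:=\tfrac12-\tfrac{4b^2}{3}>0$ universal. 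Testing $\phi\equiv 1$ on each connected component $\Sigma_i$ and invoking Gauss--Bonnet gives $2\pi\chi(\Sigma_i)\ge c_0|\Sigma_i|$, so each $\Sigma_i\cong S^2$ with area at most $4\pi/c_0$.

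To promote the area bound to a diameter bound, I would apply a 2-dimensional Bonnet--Myers-type argument to the stability operator $-\Delta_{\Sigma_i}+K_{\Sigma_i}$, whose spectral gap is at least $c_0>0$. Writing $-\Delta u+K_{\Sigma_i} u=\lambda u$ for a positive first eigenfunction $u$ and passing to the conformal metric $\tilde g=u^2 g_{\Sigma_i}$ produces a metric on $\Sigma_i$ of uniformly positive Gauss curvature $\tilde K = \lambda u^{-2}+u^{-4}|\nabla u|^2$. A suitable two-sided bound on $u$ on $\Sigma_i$---which must be extracted from the $\mu$-bubble analysis---then yields $\diam(\Sigma_i)\le c$ in the original metric for a universal constant $c$.

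The main obstacle, in my view, is this last diameter step: extracting a pointwise diameter bound from the integrated stability estimate while ensuring that the auxiliary conformal factor is controlled uniformly in $(X,g)$. The remaining ingredients---existence and regularity of the $\mu$-bubble, barrier behavior at the endpoints, and the Gauss--Bonnet computation---are by now standard in the $\mu$-bubble literature, as developed in \cite{aspherical,CLS,chodosh2022complete}.
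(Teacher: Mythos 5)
The lemma is cited in the paper without proof, so there is no internal argument to compare against; your proposal must stand on its own against the construction in the cited references. Your existence/regularity setup, the choice $h(\rho)=\tfrac{4b}{3}\cot(b\rho)$ with its Riccati identity, the stability inequality, the traced Gauss equation, the pointwise bound $\p_\nu h\ge h'(\rho)$, and the Gauss--Bonnet/area conclusion that each component is a sphere of area $\le 4\pi/c_0$ are all correct and in line with the standard $\mu$-bubble argument.

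The genuine gap is exactly where you flagged it, but the fix is not the one you anticipate. No Harnack-type two-sided bound on the eigenfunction $u$ is available from the $\mu$-bubble analysis, and in fact none is needed; what you are missing is that the gradient term $u^{-4}|\nabla u|^2$ in $\tilde K$ (which you correctly wrote down) does the work, provided you argue along a single geodesic rather than trying to apply Bonnet--Myers to $\tilde g$ globally. Concretely: given $p,q\in\Sigma_i$, let $\gamma_0$ minimize $\int_\gamma u\,ds$ among curves from $p$ to $q$; this is a $\tilde g$-geodesic, so its second variation of $\tilde g$-length gives, after switching to the $g$-arclength parameter $s\in[0,L]$ (with $L=\mathrm{length}_g(\gamma_0)\ge d_g(p,q)$) and using $\tilde K\ge u^{-2}(c_0+|\nabla\log u|^2)\ge u^{-2}(c_0+((\log u)')^2)$,
\[
\int_0^L u^{-1}\bigl(\psi'\bigr)^2\,ds \;\ge\; \int_0^L u^{-1}\Bigl(c_0+\bigl((\log u)'\bigr)^2\Bigr)\psi^2\,ds .
\]
Substituting $\psi=u^{1/2}\chi$ and applying Young's inequality to absorb the cross term $(\log u)'\chi\chi'$ (with weights $\tfrac34$ and $\tfrac13$) causes all terms involving $u$ to cancel, leaving $\tfrac{4}{3}\int_0^L(\chi')^2\ge c_0\int_0^L\chi^2$. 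Testing with $\chi=\sin(\pi s/L)$ yields $L\le 2\pi/\sqrt{3c_0}$, hence $d_g(p,q)\le 2\pi/\sqrt{3c_0}$, which is the universal diameter bound. This pointwise-free completion is the ``stable Bonnet--Myers'' argument used in the cited references; as written, your proposal correctly identifies the gap but proposes an unavailable ingredient (two-sided bounds on $u$) rather than the weighted second-variation trick that actually closes it.
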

Throughout the remainder of this note, we will let $L$ and $c$ to be the universal constants from Lemma \ref{lem:mu-bubble}, and without loss of generality, we may assume that ${L>4c}$.

Fix a point $p\in M$ and a large constant $\ell$, we assume that $R_g\ge 1$ within $B_{\ell}(p)$. Let $\gamma:[0,\ell]\ra M$ to be a unit speed minimizing geodesic with $\gamma(0)=p$. For each $k\ge 1$ where $(k+1)L<\ell$, we apply Lemma \ref{lem:mu-bubble} to $M\backslash \overline{B_{kL}(p)}$. This yields a smooth surface $\Tilde{\Sigma}_k$ that is homologous to $\p B_{kL}(p)$, with the following properties: 
\begin{itemize}
    \item $\Tilde{\Sigma}_k\subset N_{\frac{L}{2}}(\p B_{kL}(p))\cap \left(M\backslash \overline{B_{kL}(p)}\right)$.
    \item The diameter of each connected component of $\Tilde{\Sigma}_k$ is bounded by $c$.
\end{itemize}
Note that for all positive integers $k$ with $(k+1)L<\ell$, we must have $\gamma\cap\Tilde{\Sigma}_k\ne\emptyset$. We set $\Sigma_k\subset \Tilde{\Sigma}_k$ as a connected component that intersects with $\gamma$. Finally, we let $t_k\in [0,\ell] $ such that $p_k:=\gamma(t_k)\in \Sigma_k$.
\begin{lemma}{\label{lem:break-down}}
    We have $d(p_k,p_{k+1})=|t_k-t_{k+1}|< 2L$.
\end{lemma}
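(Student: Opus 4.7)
My plan is to read off tight upper and lower bounds on each parameter $t_k$ directly from the inclusion of $\tilde\Sigma_k$ provided by Lemma \ref{lem:mu-bubble}, and then subtract.

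First I would observe that, since $\gamma|_{[0,\ell]}$ is unit-speed and minimizing, every subarc is minimizing as well, so $d(\gamma(s), \gamma(t)) = |t - s|$ for all $s, t \in [0, \ell]$. This at once gives the equality $d(p_k, p_{k+1}) = |t_k - t_{k+1}|$ asserted in the claim and identifies $t_k$ with $d(p, p_k)$.

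Next I would exploit that $\Sigma_k \subset \tilde\Sigma_k \subset N_{L/2}(\partial B_{kL}(p)) \cap (M \setminus \overline{B_{kL}(p)})$, as recorded just before the claim. Applied to $p_k \in \Sigma_k$: the complement factor forces $t_k > kL$, while the tubular-neighborhood factor yields a point of $\partial B_{kL}(p)$ within distance $L/2$ of $p_k$, and the triangle inequality then bounds $t_k < kL + L/2$. The analogous argument at level $k+1$ gives $(k+1)L < t_{k+1} < (k+1)L + L/2$. Subtracting these chains produces $L/2 < t_{k+1} - t_k < 3L/2$, which is comfortably less than $2L$.

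There is no real obstacle here; once Lemma \ref{lem:mu-bubble} is in hand, the argument is pure triangle-inequality bookkeeping. The only minor point worth noting is that replacing $\tilde\Sigma_k$ by its connected component $\Sigma_k$ preserves the ambient containment, which is automatic, so the bounds on $t_k$ extracted from $\tilde\Sigma_k$ apply verbatim to $p_k \in \Sigma_k$.
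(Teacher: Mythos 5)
Your proof is correct and follows essentially the same approach as the paper: both extract bounds on $t_k$ from the containment $\Sigma_k \subset N_{L/2}(\partial B_{kL}(p)) \cap \left(M \setminus \overline{B_{kL}(p)}\right)$ and then subtract. You derive the slightly tighter estimate $kL < t_k < kL + L/2$, whereas the paper records the looser $kL \le t_k < (k+1)L$; either way the conclusion $|t_k - t_{k+1}| < 2L$ follows immediately.
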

\begin{proof}
    Given that $\Sigma_k\subset N_{\frac{L}{2}}(\p B_{kL}(p))\cap \left(M\backslash \overline{B_{kL}(p)}\right)$, it follows that
    $$kL \le t_k < (k+1)L.$$
    Therefore $|t_k-t_{k+1}|<2L$.
\end{proof}
\begin{lemma}{\label{lem:separate}}
    For any $0<s_0<t_k <s_1<\ell$, if $\gamma'$ is any continuous path joining $\gamma(s_0)$ and $\gamma(s_1)$, then $\gamma'\cap\Sigma_k\ne\emptyset$.
\end{lemma}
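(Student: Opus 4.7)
The plan is to show that $\Sigma_k$ acts as a topological separator between $\gamma(s_0)$ and $\gamma(s_1)$, so that any continuous path joining them must cross it. My approach has three steps.

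First, I would pass to the universal cover $\widetilde{M}$ of $M$. The Ricci and scalar curvature bounds, together with the minimizing property of $\gamma$, all lift, and the connected surface $\Sigma_k$---being a stable $\mu$-bubble in a $3$-manifold with $R_g\ge 1$, hence a $2$-sphere by the usual Gauss-Bonnet argument---lifts homeomorphically to a connected component of its preimage in $\widetilde{M}$. Accordingly, I may assume $M$ is simply connected.

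Second, I would invoke the standard topological fact that in a simply connected $3$-manifold any closed, embedded, two-sided surface separates. The surface $\Sigma_k$ is two-sided since it appears as a component of the smooth boundary $\partial\Omega$. If $\Sigma_k$ failed to separate $M$, there would exist a loop $\alpha$ crossing $\Sigma_k$ transversely in exactly one point; such a loop has mod-$2$ intersection number $1$ with $\Sigma_k$, contradicting $\pi_1(M)=0$. Thus $M\setminus\Sigma_k=A_+\sqcup A_-$, and I choose notation so that $p\in A_+$; since $B_{kL}(p)$ is connected and disjoint from $\Sigma_k$, we automatically get $B_{kL}(p)\subset A_+$.

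Third, I would locate $\gamma(s_0)$ and $\gamma(s_1)$. The diameter bound $\operatorname{diam}(\Sigma_k)\le c$ together with the minimizing identity $d(\gamma(s),p_k)=|s-t_k|$ forces $\gamma^{-1}(\Sigma_k)\subset(t_k-c,t_k+c)$. Consequently $\gamma([0,t_k-c])$ is a continuous path in $M\setminus\Sigma_k$ starting at $p\in A_+$ and thus lies entirely in $A_+$; analogously, $\gamma([t_k+c,\ell])$ is contained in a single fixed component of $M\setminus\Sigma_k$. Identifying this second component as $A_-$ would complete the proof via continuity extension to all admissible $s_0,s_1$: any continuous $\gamma'$ joining points in $A_+$ and $A_-$ must intersect the separator $\Sigma_k$. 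To identify the component, I would use the $\mu$-bubble structure: $A_+$ contains $\Omega\cup B_{kL}(p)$, with $\Omega$ confined to the shell $kL\le d(\cdot,p)<kL+L/2$. For sufficiently large $s$, $\gamma(s)$ exits this shell; combined with the fact that $\gamma$ travels radially outward and $L>4c$, one expects $\gamma(s)\in A_-$ for all $s>t_k$.

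The main obstacle I anticipate is making this final identification fully rigorous when $s_1$ is only slightly larger than $t_k$. The difficulty is to rule out that $\gamma$ merely touches $\Sigma_k$ tangentially at $p_k$ and remains on the $A_+$-side locally; equivalently, to verify that $\gamma$ does change sides at $p_k$. This step would require exploiting the specific structure of $\Sigma_k$ as a stable prescribed-mean-curvature surface interacting with a minimizing geodesic, possibly via a perturbation argument showing that the mod-$2$ crossing number of $\gamma|_{[s_0,s_1]}$ with $\Sigma_k$ is odd and then applying the null-homotopy of $\gamma|_{[s_0,s_1]}*(\gamma')^{-1}$ in the simply connected $M$ to conclude $\gamma'\cap\Sigma_k\ne\emptyset$.
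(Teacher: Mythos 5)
Your approach is in essence the same as the paper's: both reduce to the fact that in a simply connected $3$-manifold a loop has trivial mod-$2$ intersection number with any closed embedded two-sided surface, so if $\gamma_{s_0,s_1}*(\gamma')^{-1}$ had odd intersection number with $\Sigma_k$ while $\gamma'\cap\Sigma_k=\emptyset$ we would get a contradiction. Two remarks on your setup: the reduction to simply connected $M$ was already made once and for all in the paper (immediately before Lemma~\ref{lem:mu-bubble}), so your first step is redundant; and separation only needs that $\Sigma_k$ is a closed embedded two-sided surface (it is a boundary component of $\Omega$), not that it is a sphere, so the Gauss--Bonnet digression adds nothing.

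The obstacle you flag at the end is the genuine content of the lemma, and it is exactly the step the paper compresses into the single clause that the loop ``has a non-trivial intersection number with $\Sigma_k$.'' As you note, this does not follow merely from $p_k\in\Sigma_k$: $\gamma$ could a priori meet $\Sigma_k$ with even multiplicity near $p_k$, leaving $\gamma(s_0)$ and $\gamma(s_1)$ on the same side. The missing ingredient is the homology furnished by the $\mu$-bubble construction: $\partial\Omega=\Tilde{\Sigma}_k\sqcup\partial B_{kL}(p)$ makes $\Omega\cup\overline{B_{kL}(p)}$ a compact region bounded by $\Tilde{\Sigma}_k$, and since $p=\gamma(0)$ lies inside while $\gamma(\ell)$ lies outside (as $\Tilde{\Sigma}_k$ sits in the shell $kL<d(\cdot,p)<kL+\tfrac{L}{2}$), $\gamma$ meets $\Tilde{\Sigma}_k$ an odd number of times; one should therefore take $\Sigma_k$ to be a component that $\gamma$ crosses an odd number of times, not merely a component meeting $\gamma$. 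Combined with the diameter bound, which confines $\gamma^{-1}(\Sigma_k)$ to an interval of length at most $c$ containing $t_k$, this yields the separation of $\gamma(s_0)$ and $\gamma(s_1)$ whenever $|s_i-t_k|>c$, which covers the one place the lemma is invoked (Lemma~\ref{lem:CLS-step-1}, where $|s_i-t_k|=S/2>L>4c$). Your proposal sketches precisely this route but stops just before closing it, so as written it is incomplete, although the gap sits in the same place where the paper's one-line proof is itself terse.
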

\begin{proof}
    Suppose not, then $\gamma_{s_0,s_1}$ and $\gamma'$ form a loop that has a non-trivial intersection number with $\Sigma_k$, contradicting the fact that $M$ is simply connected.
\end{proof}
\subsection{Results from Cheeger-Colding Theory}
The second key component of our proof is the almost-splitting theorem, which we outline as follows. Let ${S=5c+2L}$. We let $r_0\gg S$ as a large constant to be determined later. We denote $\Psi=\Psi(R):\mathbb{R}^{+}\to \mathbb{R}$ as a continuous function that may vary from line by line, with the property that $\lim_{R\to\infty}\Psi(R)=0$. 

Let $R\ge r_0$ and $k\in \mathbb{N}^+$ with $R<t_k<\ell-R$.  We set up some notations: First we define $B_k:=B_S(p_k)$ as the region to apply the almost splitting theorem. Let $b$ to be the Buseman function associate with the geodesic $\gamma$
    $$b(y):=d(y,\gamma(t_{k}+ R))-R$$
We then define $h$ as the harmonic replacement of $b$ in a larger ball:
\begin{align*}
    \begin{cases}
        \lp h=0, \text{ in }B_{16S}(p_k)\\
        h=b, \text{ on }\p B_{16S}(p_k)
    \end{cases}
\end{align*}
By perturbing an arbitrarily small amount along $\gamma$, we assume $h(p_k)$ is a regular value of $h$. We define $\Gamma_k:= h^{-1}\{(h(p_k)\}$ to be the level set of $h$ at $h(p_k)$. For any $x\in B_k$, we denote $x'$ to be a point in $\Gamma_k$ that minimizes the distance to $x$ among all points in $\Gamma_k$.
\begin{lemma}[Cheeger-Colding \cite{Cheeger-Colding}]{\label{lem:CC}}
Suppose $x,y,z\in B_{2S}(p_k)$, with $h(x)=h(z)$, and $z$ minimizes the distance from $y$ over the level set $h^{-1}\{h(z)\}$, then
\begin{align}
    {\label{eqn:h-closeb}}
        &|h(x)-b(x)|\le \Psi\\
        &{\label{eqn:h-approximate-distance}}
        \left|d(y,z)-|h(y)-h(z)|\right|\le \Psi\\
        &{\label{eqn:pythagoras}}
        \left|d(x,y)^2-d(x,z)^2-d(y,z)^2\right|\le \Psi
\end{align}
\end{lemma}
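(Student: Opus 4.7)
The plan is to derive the three estimates as quantitative consequences of the Cheeger--Colding almost-splitting, exploiting that $p_k$ sits near the midpoint of the long minimizing segment $\gamma|_{[t_k-R,\,t_k+R]}$ with $R\gg S$.

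First I would show that $b$ is an almost-Busemann function on $B_{16S}(p_k)$. Set $\tilde b(y) := R - d(y,\gamma(t_k-R))$. The triangle inequality gives $b\ge\tilde b$ with equality along $\gamma$, while Laplacian comparison under $\Ric\ge 0$ yields $\Delta b\le \Psi$ and $\Delta\tilde b\ge -\Psi$ in the barrier sense throughout $B_{16S}(p_k)$. Integrating the nonnegative function $b-\tilde b$ against a suitable cutoff as in \cite{Cheeger-Colding} produces $\sup_{B_{8S}(p_k)}|b-\tilde b|\le \Psi$, so that $b$ behaves like a Busemann function with $|\nabla b|\approx 1$ and $\Delta b\approx 0$ in a weak sense.

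Next, the comparison between $h$ and $b$ is pure elliptic theory: since $\Delta(h-b)=-\Delta b$ is distributionally small and vanishes on $\partial B_{16S}(p_k)$, the maximum principle combined with Moser iteration yields $|h-b|\le \Psi$ pointwise on $B_{2S}(p_k)$, establishing \eqref{eqn:h-closeb}. For the remaining estimates I would invoke the Cheeger--Colding integral Hessian bound. Plugging $\Delta h=0$ into the Bochner identity
\[
\tfrac{1}{2}\Delta|\nabla h|^2 = |\mathrm{Hess}\,h|^2 + \Ric(\nabla h,\nabla h),
\]
pairing with a cutoff, and using the almost-harmonicity of $b$ together with the $C^0$ bound just established produces
\[
\frac{1}{\vol(B_{8S}(p_k))}\int_{B_{8S}(p_k)}\bigl(|\mathrm{Hess}\,h|^2 + \bigl||\nabla h|^2-1\bigr|\bigr)\,dV \le \Psi.
\]
Along a unit-speed minimizing geodesic $\alpha:[0,d(y,z)]\to M$ from $y$ to $z$ we have $h(z)-h(y)=\int_0^{d(y,z)}\langle\nabla h,\dot\alpha\rangle\,ds$. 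The minimizing property of $z$ on $h^{-1}\{h(z)\}$ forces $\dot\alpha$ to be parallel to $\nabla h(z)$ at the endpoint, and the Cheeger--Colding segment inequality converts the averaged Hessian bound into an $L^1$ control on $|\nabla_{\dot\alpha}\nabla h|$ along $\alpha$; combined with $|\nabla h|\approx 1$ this gives \eqref{eqn:h-approximate-distance}. Inequality \eqref{eqn:pythagoras} then follows by projecting $y$ to the level set $h^{-1}\{h(x)\}$ at a point $y'$ and applying the distance approximation to the two legs $[x,y']$ and $[y',y]$ --- the Pythagorean identity is exact in the limiting split space $N\times\R$, and the $\Psi$-error simply records the Gromov--Hausdorff approximation.

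The principal obstacle is upgrading the averaged Hessian bound to pointwise estimates valid for all $x,y,z\in B_{2S}(p_k)$, since the segment inequality only yields good control on a generic set of pairs. This is resolved by working on the larger ball $B_{16S}(p_k)$ in the definition of $h$ and using that the evaluation points lie deep in the interior, so that the exceptional set is absorbed by interior regularity. The requisite bookkeeping is the content of the Cheeger--Colding argument in \cite{Cheeger-Colding}, and the proof ultimately amounts to invoking their framework and verifying that the resulting error depends only on $R$, with $\Psi(R)\to 0$ as $R\to\infty$.
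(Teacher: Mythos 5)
The paper does not prove this lemma; it simply quotes it as a consequence of Cheeger--Colding \cite{Cheeger-Colding}, so there is no internal argument to compare against. Your sketch correctly identifies the standard ingredients of that framework: the two-sided Busemann-type excess estimate à la Abresch--Gromoll (comparing $b$ with $\tilde b = R - d(\cdot,\gamma(t_k-R))$ via Laplacian comparison under $\Ric\ge 0$), the harmonic replacement $h$ and the $C^0$ bound $|h-b|\le\Psi$, the Bochner identity giving the integral Hessian bound $\frac{1}{\vol B}\int_{B}\bigl(|\mathrm{Hess}\,h|^2 + ||\nabla h|^2-1|\bigr)\le\Psi$, and the segment inequality to convert integral control into control along geodesics. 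Your observation that the projection point $y'$ in the Pythagorean step coincides with $z$ (since $h(x)=h(z)$ and $z$ is the foot of $y$ on that level set) is exactly how \eqref{eqn:pythagoras} is read in this setup.

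One mechanism you propose does not quite work as stated. The gap between the segment inequality (which controls $\int_\alpha |\mathrm{Hess}\,h|$ only for a full-measure set of pairs of endpoints) and the pointwise conclusion for arbitrary $x,y,z\in B_{2S}(p_k)$ is not resolved by interior regularity of $h$ on the larger ball. Smoothness of $h$ is automatic since $h$ is harmonic, but that gives no pointwise bound on $\mathrm{Hess}\,h$ independent of $R$. The actual resolution in \cite{Cheeger-Colding} is that the three quantities $|h-b|$, $\bigl|d(y,z)-|h(y)-h(z)|\bigr|$, and $|d(x,y)^2-d(x,z)^2-d(y,z)^2|$ are uniformly Lipschitz in the points (in terms of $S$, not $R$), so a bound on a set of full measure automatically upgrades, after a slight worsening of $\Psi$, to a bound for all points by perturbing to nearby good triples. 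Also note that at the step comparing $h$ to $b$ the maximum principle alone is insufficient, since $\Delta b\le\Psi$ is one-sided; one needs to use $b-\tilde b\le\Psi$ together with $\Delta\tilde b\ge-\Psi$ to control $h-b$ from both directions. With those two corrections your outline matches the Cheeger--Colding argument the paper is invoking.
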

\begin{cor}{\label{cor:CC-on-geo}}
For all $t$ with $x=\gamma(t)\in B_{k}$, we have
\begin{align*}
    d(x,x')\le |t-t_k|+\Psi,\qquad d(p_k,x')\le \Psi
\end{align*}
\end{cor}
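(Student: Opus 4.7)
The plan is to get the first inequality essentially for free from the definition of $\Gamma_k$, and to extract the second from the approximate Pythagoras identity in Lemma \ref{lem:CC}. Observe first that $p_k \in \Gamma_k$, since by construction $\Gamma_k = h^{-1}\{h(p_k)\}$; therefore, for any $x = \gamma(t) \in B_k$,
\[
d(x, x') \;\le\; d(x, p_k) \;=\; |t - t_k|,
\]
which is already sharper than the claimed first inequality.

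For the bound $d(p_k, x') \le \Psi$, I first check that Lemma \ref{lem:CC} applies to the triple $(p_k, x, x')$ playing the roles of $(x, y, z)$. Since $x \in B_k = B_S(p_k)$ and $p_k \in \Gamma_k$, one has $d(x, x') \le S$, hence $x' \in B_{2S}(p_k)$; moreover $h(p_k) = h(x')$, and $x'$ is by definition a nearest point to $x$ on $\Gamma_k = h^{-1}\{h(x')\}$. The Pythagorean-type estimate (\ref{eqn:pythagoras}) then yields
\[
\bigl| d(p_k, x)^2 - d(p_k, x')^2 - d(x, x')^2 \bigr| \;\le\; \Psi.
\]

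To convert this into an upper bound on $d(p_k, x')$, I need a matching lower bound on $d(x, x')$. This comes from (\ref{eqn:h-approximate-distance}) applied to $y = x$, $z = x'$, combined with (\ref{eqn:h-closeb}): since $h(x') = h(p_k)$, $|h - b| \le \Psi$ throughout $B_{2S}(p_k)$, and $b(\gamma(t)) = t_k - t$ along the minimizing geodesic $\gamma$ (because $d(\gamma(t),\gamma(t_k+R)) = (t_k+R) - t$ for $t \le t_k+R$, which holds on $B_k$), one obtains
\[
d(x, x') \;\ge\; |h(x) - h(p_k)| - \Psi \;\ge\; |b(x) - b(p_k)| - 3\Psi \;=\; |t - t_k| - 3\Psi.
\]
Inserting this together with $d(p_k, x) = |t - t_k|$ into the Pythagorean inequality and using $|t - t_k| \le S$, with $S$ a fixed constant, gives
\[
d(p_k, x')^2 \;\le\; |t - t_k|^2 - \bigl(|t - t_k| - 3\Psi\bigr)^2 + \Psi \;\le\; 6S\,\Psi + \Psi,
\]
after which taking the square root and absorbing the fixed constant into a redefined $\Psi$ yields the second claim.

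There is no genuine obstacle here; the only subtlety worth flagging is bookkeeping of the error term. The application of (\ref{eqn:pythagoras}) naturally controls a squared distance, whereas the conclusion requires a linear bound, so one uses that if $f(R)$ is a $\Psi$ in the sense of the paper, then so is $\sqrt{S\,f(R)}$ for any fixed $S$. The edge case $|t - t_k| < 3\Psi$, where the displayed square is not literally non-negative, is handled trivially by $d(p_k, x') \le 2|t - t_k|$ and is not worth elaborating on.
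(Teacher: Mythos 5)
Your proof is correct and essentially the same as the paper's. The paper establishes the two-sided estimate $\bigl|d(x,x')-|t-t_k|\bigr|\le\Psi$ from \eqref{eqn:h-closeb} and \eqref{eqn:h-approximate-distance} and then invokes \eqref{eqn:pythagoras} for the second claim; you streamline the first bound by noting $d(x,x')\le d(x,p_k)=|t-t_k|$ directly from $p_k\in\Gamma_k$, but you then re-derive the matching lower bound $d(x,x')\ge|t-t_k|-3\Psi$ (which is the content of the paper's chain of inequalities anyway) before applying the Pythagorean estimate with $(p_k,x,x')$ in the roles of Lemma \ref{lem:CC}'s $(x,y,z)$, exactly as the paper intends. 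The bookkeeping on $\Psi$, including that $\sqrt{S\,\Psi}$ is again a $\Psi$, and the edge case $|t-t_k|<3\Psi$ handled via $d(p_k,x')\le 2|t-t_k|$, are both fine.
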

\begin{proof}
    It is clear that for all $x\in B_k$ we have $x'\in B_{2s}(p_k)$. It then follows from \eqref{eqn:h-closeb} that $|h|\le \Psi$ on $\Gamma_k$. With \eqref{eqn:h-approximate-distance} together we obtain
    \begin{align*}
        \left|d(x,x')-|t-t_k|\right|&= \left|d(x,x')-|b(x)|\right|\\
            &\le \left|d(x,x')-|h(x)|\right|+\Psi\\
            &\le \left|d(x,x')-|h(x)-h(x')|\right|+\Psi\\
            &\le \Psi
    \end{align*}
    This establishes the first inequality; the second inequality then follows from \eqref{eqn:pythagoras}.
\end{proof}
\begin{lemma}{\label{lem:CLS-step-1}}
    There exists some $r_0=r_0(c,L,S)$ such that for all $R\ge r_0$ the following statement is true. For all $k$ with $R<t_k<\ell-R$, we have  
    $$\diam\left(\Gamma_k\cap B_k\right)\le 3c$$
\end{lemma}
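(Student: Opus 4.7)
The plan is to argue by contradiction via Lemma \ref{lem:separate}: assuming $\diam(\Gamma_k\cap B_k)>3c$, I will construct a continuous path in $M$ from $\gamma(t_k-A)$ to $\gamma(t_k+A)$, for a suitably chosen $A<R$, that lies entirely outside $\overline{B_c(p_k)}$. Since $h(p_k)$ was arranged to be a regular value, $p_k\in\Gamma_k\cap B_k$; and since $p_k\in\Sigma_k$ with $\diam(\Sigma_k)\le c$, we have $\Sigma_k\subset\overline{B_c(p_k)}$. The triangle inequality centered at $p_k$ then yields some $y\in\Gamma_k\cap B_k$ with $d(p_k,y)>3c/2$, and producing the required path contradicts Lemma \ref{lem:separate}.

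The path will be a ``rectangular detour'' through $y$, modeled on the almost-splitting $N\times\R$ furnished by Cheeger-Colding, in which $h$ plays the role of the $\R$-coordinate by eq.\ \eqref{eqn:h-closeb}. Explicitly, set $A:=S/2+2c$, put $q_\pm:=\gamma(t_k\pm A)$, let $y'_\pm$ be a point on the level set $h^{-1}(h(q_\pm))$ minimizing distance to $y$, and let $\sigma$ be the concatenation of four minimizing geodesics $q_-\to y'_-\to y\to y'_+\to q_+$. In the product model (with $p_k\sim(n_0,0)$, $y\sim(n_y,0)$ and $d_N(n_0,n_y)>3c/2-\Psi$), $\sigma$ traces out a rectangle of side lengths $\approx d(p_k,y)$ and $A$ around $(n_0,0)$, staying at distance $\ge\min(A,d(p_k,y))-O(\Psi)>c$ from $(n_0,0)$. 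The choice $A=S/2+2c$ is designed so that the ``horizontal'' legs are sufficiently far above $p_k$ in the $h$-direction while $\sigma$ remains in $B_{2S}(p_k)$ (using $A<2S$ and $d(p_k,y)\le S$), which is the region in which Lemma \ref{lem:CC} applies.

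The main obstacle is the point-by-point verification that each of the four legs of $\sigma$ stays outside $\overline{B_c(p_k)}$: Lemma \ref{lem:CC} only provides Cheeger-Colding estimates on triples, not full geodesic tracking, so I must apply it to each interior point of each leg. For an interior point $q$ of a vertical leg (say $y'_-\to y$, with arc length $s$ from $y'_-$), I would let $q'\in\Gamma_k$ be its projection; then eq.\ \eqref{eqn:h-approximate-distance} gives $d(q,q')\approx|h(q)|$, and combining $|h-b|\le\Psi$ with the $1$-Lipschitz property of $b$ pins down $h(q)\approx A-s$ along the leg. Pythagoras \eqref{eqn:pythagoras} on $\{y,q,q'\}$, together with $d(y'_-,y)\approx A$ from \eqref{eqn:h-approximate-distance}, forces $d(y,q')=O(\sqrt\Psi)$, whence $d(p_k,q')\ge d(p_k,y)-O(\sqrt\Psi)>3c/2-O(\sqrt\Psi)$; a second application of \eqref{eqn:pythagoras} to $\{p_k,q,q'\}$ then yields $d(p_k,q)>c$. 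For an interior point $q$ of a horizontal leg ($q_-\to y'_-$, say), the same Lipschitz bounds force $h(q)\ge A-\min(s,v-s)-O(\Psi)\ge A-S/2-O(\Psi)=2c-O(\Psi)$, where $v=d(q_-,y'_-)\le S$; since $b(p_k)=0$, this gives $d(p_k,q)\ge|b(q)|\ge|h(q)|-\Psi>c$. Finally, taking $r_0$ large enough that the accumulated $\Psi$-errors are bounded by, say, $c/10$ closes the argument.
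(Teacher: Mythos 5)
Your proposal is correct in outline but takes a genuinely different route from the paper's own proof. Both hinge on Lemma \ref{lem:separate}, yet use it dually: the paper picks a far-away $y\in\Gamma_k\cap B_k$, notes that at least one of the two minimizing geodesics from $\gamma(t_k\pm S/2)$ to $y$ must cross $\Sigma_k$, and then obtains contradictory bounds on the length of that geodesic --- an upper bound from the Cheeger--Colding Pythagoras inequality \eqref{eqn:pythagoras} applied at $(q,y,q')$, and a lower bound from the triangle inequality through the crossing point $\sigma(s_1)\in\Sigma_k\subset\overline{B_c(p_k)}$. You instead build an explicit four-legged detour from $\gamma(t_k-A)$ to $\gamma(t_k+A)$ through the lifts $y'_\pm$ of $y$, and verify pointwise that it avoids $\overline{B_c(p_k)}\supset\Sigma_k$, contradicting Lemma \ref{lem:separate} outright. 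The paper's approach is more economical, needing Cheeger--Colding estimates only at a single geodesic and a couple of reference points, whereas yours requires tracking every point of four geodesic segments, which you correctly flag as the main technical burden; your sketch of those bounds (pinning down $h(q)$ via the approximate $1$-Lipschitz property of $h$, then \eqref{eqn:pythagoras} to control the horizontal distance to $p_k$) is sound, though a full write-up must check that each $q$ and its projection $q'$ stay inside the region $B_{2S}(p_k)$ where Lemma \ref{lem:CC} applies (the corners $y'_\pm$ lie at distance roughly $\sqrt{d(p_k,y)^2+A^2}<2S$, which uses $S/2+2c<S$), and that the accumulated $\Psi$- and $\sqrt\Psi$-errors over four legs are absorbed by taking $r_0$ large. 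One small point in your favor: from $\diam(\Gamma_k\cap B_k)>3c$ and $p_k\in\Gamma_k\cap B_k$ the triangle inequality only yields $d(p_k,y)>3c/2$, which your detour argument tolerates; the paper's step asserting $d(y,B_c(p_k))>2c$, i.e.\ $d(p_k,y)>3c$, is stronger than the hypothesis warrants, and its lower-bound computation subsequently invokes $d(y,p_k)\ge 2c$, so its constants would need minor adjusting, whereas your argument is self-consistent as stated.
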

\begin{proof}
Suppose, instead, that $\diam (\Gamma_k\cap B_k)>3c$.
Then, there exists some $y\in \Gamma_k\cap B_k$ such that $d(y, B_c(p_k))>2c$.
In particular, since $\diam(\Sigma_k)\le c$, we obtain that $d(y, \Sigma_k)>2c$. 

Now we take $\sigma_{\pm}$ to be the unit speed minimizing geodesic joining $\gamma\left(t_k\pm\frac{S}{2}\right)$ to $y$. By Lemma \ref{lem:separate}, we must have at least one of $\sigma_{\pm}$ intersect with $\Sigma_k$. For simplicity, we denote $\sigma$ as the minimizing geodesic such that
$$\sigma(s_0)=\gamma(\Bar{t})=q,\qquad \sigma(s_1)\in\Sigma_{k},\qquad\sigma(s_2)=y$$
where  $s_0<s_1<s_2$ and $|\bar{t}-t_k|=\frac{S}{2}$.
By Corollary \ref{cor:CC-on-geo}, we know that
\[d(q,q')\le\frac{S}{2}+\Psi,\qquad d(q',p_k)\le \Psi\]
Next, we apply \eqref{eqn:pythagoras} to obtain
\begin{align*}
    (s_2-s_0)^2&\le d(q,q')^2+d(q',y)^2+\Psi\\
    &\le \left(\frac{S}{2}+\Psi\right)^2+\bigg(d(p_k,y)+d(q',p_k)\bigg)^2+\Psi\\
    & \le \left(\frac{S}{2}\right)^2+ d(p_k,y)^2+\Psi
\end{align*}
Hence, by the definition of $\Psi$, there exists some $r_0=r_0(c,L,S)$ such that for all $R\ge r_0$, we have
\begin{equation}{\label{eqn:pythagoras-s2-s0-1}}
\begin{split}
    (s_2-s_0)^2\le d(p_k,y)^2+\left(\frac{S}{2}\right)^2+2c^2
\end{split}
\end{equation}
On the other hand, since $d\left(\sigma(s_1),p_k\right)\le c$, we apply the triangle inequality.
\begin{align*}
    (s_2-s_0)^2
    &=\left[d(q,\sigma(s_1))+d(\sigma(s_1),y)\right]^2\\
    &\ge \left[d(q,p_k)+d(y,p_k)-2c\right]^2\\
    &=d(q,p_k)^2+d(y,p_k)^2+4c^2\\ &+d(q,p_k)\left(d(y,p_k)-2c\right)+d(y,p_k)\left(d(q,p_k)-2c\right)
\end{align*}
Given that $d(y,p_k)\ge 2c$ and $d(q,p_k)=\frac{S}{2}\ge  2c$, this implies
\begin{align*}
    (s_2-s_0)^2
    &\ge d(q,p_k)^2+d(y,p_k)^2+4c^2\\
    &=\left(\frac{S}{2}\right)^2+d(y,p_k)^2+4c^2
\end{align*}
which contradicts \eqref{eqn:pythagoras-s2-s0-1}.
\end{proof}
\begin{lemma}{\label{lem:ball-around-geodesic}}
    There exists some $r_0=r_0(c,L,S)$ such that for all $R\ge r_0$ the following statement is true. For all $k$ with $R<t_k<\ell-R$, we have $B_k\subset N_{4c}(\gamma_{0,\ell})$. 
\end{lemma}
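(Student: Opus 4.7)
The plan is to show that every $x\in B_k=B_S(p_k)$ lies within $4c$ of a point of $\gamma_{0,\ell}$. The guiding picture is that, by Cheeger-Colding almost splitting, $B_k$ is close to a ball in $N\times\R$ with $\gamma$ playing the role of the $\R$-axis; Lemma \ref{lem:CLS-step-1} bounds $\diam(N)\le 3c$, so every point is horizontally within $3c$ of $\gamma$, up to an error $\Psi$.

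I begin by projecting $x$ onto $\Gamma_k$: let $x'\in\Gamma_k$ minimize $d(\cdot,x)$. Applying \eqref{eqn:pythagoras} with $X=p_k$, $y=x$, $z=x'$ gives
\[
d(p_k,x)^2 \approx d(p_k,x')^2 + d(x,x')^2,
\]
forcing $d(p_k,x')<S$ once $r_0$ is large, so $x'\in B_k\cap\Gamma_k$ and Lemma \ref{lem:CLS-step-1} yields $d(p_k,x')\le 3c$. Next, I select the point on $\gamma$ at the same $h$-level as $x$: setting $t^*:=t_k-h(x)$, one has $b(\gamma(t^*))=h(x)$, whence \eqref{eqn:h-closeb} yields $|h(\gamma(t^*))-h(x)|\le\Psi$. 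Since $|h(x)|\le|b(x)|+\Psi\le d(x,p_k)+\Psi\le S+\Psi$, the condition $R<t_k<\ell-R$ with $r_0>S+\Psi$ places $t^*$ in $(0,\ell)$.

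The heart of the proof is then the quadrilateral Pythagorean identity
\[
d(x,\gamma(t^*))^2 \approx d(x',p_k)^2 + \bigl(h(x)-h(\gamma(t^*))\bigr)^2 \le (3c)^2+\Psi^2,
\]
which gives $d(x,\gamma(t^*))\le 3c+\Psi\le 4c$ once $r_0$ is large enough that $\Psi\le c$. To derive it, I would first project $x$ onto the level set $h^{-1}\{h(\gamma(t^*))\}$, obtaining a foot $\hat x$ with $d(x,\hat x)\le\Psi$ by \eqref{eqn:h-approximate-distance}, reducing the problem to bounding $d(\hat x,\gamma(t^*))$ on a common level set. Two further applications of \eqref{eqn:pythagoras} then project $\hat x$ and $\gamma(t^*)$ back onto $\Gamma_k$; using Corollary \ref{cor:CC-on-geo} and the Pythagoras estimate for $x$, these projections coincide with $x'$ and $p_k$ up to $\Psi$, so the base-level diameter estimate $d(x',p_k)\le 3c$ transfers into the desired bound.

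The main obstacle is this quadrilateral comparison. The stated \eqref{eqn:pythagoras} only handles the triangle case of a point and its foot on a single level set, so one must chain several projections and verify that the accumulated error remains $O(\Psi)$, hence absorbable into the slack $4c-3c=c$ for $r_0$ large. Alternatively, one can appeal to the Cheeger-Colding almost splitting theorem in its full Gromov-Hausdorff form, which provides a $\Psi$-approximation $B_k\to N\times\R$ under which the four points $x,\gamma(t^*),x',p_k$ correspond to $(x_N,h(x)),(p_k^N,h(x)),(x_N,0),(p_k^N,0)$, and the identity reduces to Euclidean Pythagoras on the product.
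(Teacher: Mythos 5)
Your proposal follows the same strategy as the paper: project $x$ to $x'\in\Gamma_k$, invoke Lemma~\ref{lem:CLS-step-1} to get $d(x',p_k)\le 3c$, pick the point on $\gamma$ at approximately the same $h$-level as $x$, and then argue that $d(x,\gamma(t^*))\approx d(x',p_k)$ via a rectangle-type Pythagoras estimate. You also correctly identify that this rectangle estimate is the crux, and honestly flag that \eqref{eqn:pythagoras} alone only handles a right triangle (a point and its foot on a single level set), so some chaining of projections is required. What your sketch leaves unfinished is precisely that chain, and the extra projection you introduce (the foot $\hat x$ of $x$ on the level set of $\gamma(t^*)$) does not actually help: since $d(x,\hat x)\le\Psi$, bounding $d(\hat x,\gamma(t^*))$ is the same problem as bounding $d(x,\gamma(t^*))$, and the subsequent "two further applications of \eqref{eqn:pythagoras} project $\hat x$ and $\gamma(t^*)$ back onto $\Gamma_k$" is exactly the rectangle comparison you have not derived.

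The paper resolves this by a short trick you missed. Instead of projecting $x$, project $p_k$ onto $\Gamma_x:=h^{-1}\{h(x)\}$, obtaining $\hat p_k$. Now apply \eqref{eqn:pythagoras} twice to compute the \emph{same} quantity $d(x,p_k)^2$ through the two different level sets:
\[
d(x,p_k)^2 \approx d(x',p_k)^2 + \bigl(h(x)-h(x')\bigr)^2,
\qquad
d(x,p_k)^2 \approx d(x,\hat p_k)^2 + \bigl(h(p_k)-h(\hat p_k)\bigr)^2.
\]
Since $h(x')=h(p_k)$ and $h(\hat p_k)=h(x)$, the two $h$-terms are identical; subtracting gives the rectangle identity $|d(x,\hat p_k)^2 - d(x',p_k)^2|\le\Psi$ in one stroke. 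A second, independent, application of Corollary~\ref{cor:CC-on-geo} plus \eqref{eqn:pythagoras} on the triangle $p_k,\hat p_k,\gamma(s)$ (where $\gamma(s)\in\Gamma_x\cap\gamma$) shows $d(\hat p_k,\gamma(s))^2\le\Psi$, and the triangle inequality finishes the argument. Your alternative route of invoking the full Gromov--Hausdorff form of the almost-splitting theorem would also work and is conceptually cleaner, but it is more than the paper needs; the point of Lemma~\ref{lem:CC} as stated is that these three estimates already encode enough of the product structure to run a purely metric argument.

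One small issue shared with the paper (so not a mark against you): for Lemma~\ref{lem:CLS-step-1} to apply one needs $x'\in\Gamma_k\cap B_k$, while a priori the Pythagoras estimate only gives $d(p_k,x')^2\le S^2+\Psi$, i.e.\ $x'$ lies in a ball of radius slightly larger than $S$. This is harmless (either shrink $B_k$ slightly or note the proof of Lemma~\ref{lem:CLS-step-1} extends to $\Gamma_k\cap B_{2S}(p_k)$ with the same constant), but it is worth being explicit about.
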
 
\begin{proof}
Suppose that $d(x,p_k)< S$, and denote $\Gamma_x=h^{-1}\{h(x)\}$ as the level set of $h$ at $h(x)$.  

Using the estimates in Lemma \ref{lem:CC}, we obtain:
\begin{align*}
    &\left|d(x,p_k)^2-d(x',p_k)^2-(h(x)-h(x'))^2\right|\\
    &\le \left|d(x,x')^2-(h(x)-h(x'))^2\right|+\Psi\\
    &= \left|d(x,x')-|h(x)-h(x')|\right|\cdot \left(d(x,x')+|h(x)-h(x')|\right)+\Psi\\
    &\le \Psi(2d(x,x')+\Psi)+\Psi\\
    &\le \Psi
\end{align*}
Let $\hat{p}_k\in\Gamma_x$ that minimizes the distance from $p_k$ among all the points in $\Gamma_x$. Then the same argument shows that
\begin{align*}
    \left|d(x,p_k)^2-d(x,\hat{p}_k)^2-(h(p_k)-h(\hat{p}_k))^2\right|\le \Psi
\end{align*}
Given that $h(x)=h(\hat{p}_k)$ and $h(p_k)=h(x')$, we deduce:
\begin{equation}{\label{eqn:rectangle}}
    \left|d(x,\hat{p}_k)^2-d(x',p_k)^2\right|\le \Psi
\end{equation}
We know that $|h(x)|\le S+\Psi$ and $[-2S+\Psi,2S-\Psi]\subset h(\gamma_{t_k-2S,t_k+2S})$. Therefore by choosing $r_0$ large enough, one can make sure that for $R\ge r_0$ we would have 
$$|h(x)|<\frac{3}{2}S,\qquad [\frac{3}{2}S,\frac{3}{2}S]\subset h(\gamma_{t_k-2S,t_k+2S})$$
Thus we could consider some $\gamma(s)\in\gamma_{t_k-2S,t_k+2S}\cap \Gamma_x$. We find: 
\begin{align*}
    \left|d(p_k,\hat{p}_k)-d(p_k,\gamma(s))\right|
    &=\left|d(p_k,\hat{p}_k)-|s-t_k|\right|\\
    &\le \left|h(\hat{p}_k)-|s-t_k|\right|+\Psi\\
    &= \left|h(\gamma(s))-|s-t_k|\right|+\Psi\\
    &\le \Psi
\end{align*}
Using \eqref{eqn:pythagoras}, we obtain:
\[
d(\hat{p}_k,\gamma(s))^2\le d(p_k,\gamma(s))^2-d(p_k,\hat{p}_k)^2+\Psi\le \Psi
\]
Applying the triangle inequality and combining with \eqref{eqn:rectangle}, we find:
\begin{align*}
    d(x,\gamma(s))^2&\le \left(d(x,\hat{p}_k)+d(\hat{p}_k,\gamma(s))\right)^2\\
    &\le d(x,\hat{p}_k)^2+\Psi\\
    &\le d(x',p_k)^2+\Psi
\end{align*}
Finally, from Lemma \ref{lem:CLS-step-1}, we know $d(x',p_k)\le 3c$ for sufficiently large $r_0$. Replacing $r_0$ with a larger value if necessary, we conclude that if $R\ge r_0$, then
$d(x,\gamma{(s)})^2\le 16c^2$. This guarantees that $x\in N_{4c}(\gamma_{0,\ell})$.
\end{proof}
Now we fix $r_0$ to be the constant from Lemma \ref{lem:ball-around-geodesic} and we assume $\ell>2r_0$.
\begin{prop}{\label{prop:cover}}
    We have $N_{5c}(\gamma_{r_0,\ell-r_0})\subset N_{4c}(\gamma_{0,\ell})$.
\end{prop}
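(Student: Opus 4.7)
The plan is to reduce to Lemma \ref{lem:ball-around-geodesic}. First, I would fix $y \in N_{5c}(\gamma_{r_0, \ell - r_0})$ and choose $t^* \in [r_0, \ell - r_0]$ with $d(y, \gamma(t^*)) < 5c$. The whole strategy is to find an index $k$ satisfying both $r_0 < t_k < \ell - r_0$ and $|t^* - t_k| < 2L$. Once such a $k$ is in hand, the triangle inequality along the minimizing geodesic $\gamma$ will give
\[
d(y, p_k) \le d(y, \gamma(t^*)) + |t^* - t_k| < 5c + 2L = S,
\]
placing $y$ in $B_S(p_k) = B_k$, and Lemma \ref{lem:ball-around-geodesic} will immediately deliver $y \in N_{4c}(\gamma_{0,\ell})$.

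To select such a $k$, I would use the localization $t_k \in [kL, (k+1)L)$ recorded in the proof of Lemma \ref{lem:break-down}, together with the consecutive-gap bound $|t_k - t_{k+1}| < 2L$ stated in that lemma. The natural choice $k_{*} := \lfloor t^*/L \rfloor$ places $t^*$ and $t_{k_*}$ in the same interval $[k_* L, (k_*+1)L)$, so that $|t^* - t_{k_*}| < L$. This already works whenever $t^*$ lies in the interior of $(r_0, \ell - r_0)$. When $t^*$ is near one of the endpoints $r_0$ or $\ell - r_0$, the value $t_{k_*}$ may slip out of the open interval $(r_0, \ell - r_0)$; I would then shift to $k_{*} \pm 1$ and invoke the gap bound to keep $|t^* - t_k| < 2L$ while repositioning $t_k$ back inside $(r_0, \ell - r_0)$.

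The only genuine issue I expect is this endpoint bookkeeping, which may ask $\ell$ to be somewhat larger than $2r_0$; this can be absorbed by enlarging $r_0$ by a bounded multiple of $L$, still keeping its dependence on $(c, L, S)$ only. Note that the constant $S = 5c + 2L$ is calibrated precisely so that the $2L$ of slack afforded by Lemma \ref{lem:break-down} yields the strict inequality $d(y, p_k) < S$ used above, which is exactly what Lemma \ref{lem:ball-around-geodesic} requires.
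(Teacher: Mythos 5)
Your proof takes essentially the same route as the paper: both pick the nearest point $\gamma(\hat t)$ on $\gamma_{r_0,\ell-r_0}$, use Lemma \ref{lem:break-down} to find a $t_k$ within $2L$ of $\hat t$ and inside the admissible range, conclude $x\in B_k$ by the triangle inequality, and finish with Lemma \ref{lem:ball-around-geodesic}. Your version is slightly more careful than the paper's about explicitly constructing $k$ and about the endpoint bookkeeping near $r_0$ and $\ell - r_0$ (which the paper passes over quickly), but the substance and the chain of lemmas invoked are identical.
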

\begin{proof}
    Consider a point $x\in N_{5c}(\gamma_{r_0,\ell-r_0})$ and suppose
    $$d(x,\gamma(\hat{t}))=\inf_{t\in [r_0,\ell-r_0]}\gamma(t)$$
    for some $\hat{t}\in [r_0,\ell-r_0]$. Then by assumption, we have $d(x,\gamma(\hat{t}))< 5c$. Furthermore, since $|t_{k+1}-t_k|\le 2L$ by Lemma \ref{lem:break-down}, there exists some $t_k\in[r_0,\ell-r_0]$ such that $|\hat{t}-t_k|< 2L$. Applying the triangle inequality gives
    $$d(x,p_k)< d(x,\gamma(\hat{t}))+2L\le S$$
    Therefore, $x\in B_k$. By Lemma \ref{lem:ball-around-geodesic}, for $t_k\in [r_0,\ell-r_0]$, we have
    $B_k \subset N_{4c}(\gamma_{0,\ell})$.
\end{proof}
\begin{prop}{\label{prop:jump}}
    Given $x\in M$, let $\hat{t}\in[0,\ell]$ such that $d(x,\gamma(\hat{t}))=d(x,\gamma_{0,\ell})$.
    If $\hat{t}\in [r_0,\ell-r_0]$, then $d(x,\gamma(\hat{t}))\le 4c$.
\end{prop}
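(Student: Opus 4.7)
The plan is to argue by contradiction. Suppose $d_0 := d(x,\gamma(\hat{t})) > 4c$; my aim is to produce some $t^* \in [0,\ell]$ with $d(x,\gamma(t^*)) < d_0$, contradicting the minimizing property of $\hat{t}$. The key tool is Proposition \ref{prop:cover}: since $\hat{t}\in[r_0,\ell-r_0]$, the point $\gamma(\hat{t})$ lies on the subarc $\gamma_{r_0,\ell-r_0}$, so any point at distance strictly less than $5c$ from $\gamma(\hat{t})$ automatically belongs to $N_{5c}(\gamma_{r_0,\ell-r_0})\subset N_{4c}(\gamma_{0,\ell})$.

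The strategy is to slide $x$ toward $\gamma(\hat{t})$ along a minimizing geodesic until the intermediate point lies inside this $5c$-tubular neighborhood, and then transfer the resulting distance bound back to $x$ via the triangle inequality. Concretely, let $\sigma:[0,d_0]\to M$ be a unit-speed minimizing geodesic from $\gamma(\hat{t})$ to $x$ (which exists by completeness of $M$). Fix a small $\epsilon\in(0,c)$, set $\alpha:=\min(d_0,\,5c-\epsilon)$, and let $y:=\sigma(\alpha)$. The assumption $d_0>4c$ together with $\epsilon<c$ forces $\alpha\in(4c,5c)$.

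Then $d(y,\gamma(\hat{t}))=\alpha<5c$, so $y\in N_{5c}(\gamma_{r_0,\ell-r_0})$. Proposition \ref{prop:cover} places $y\in N_{4c}(\gamma_{0,\ell})$, so there exists $t^*\in[0,\ell]$ with $d(y,\gamma(t^*))<4c$. The triangle inequality then gives
\[
    d(x,\gamma(t^*))\le d(x,y)+d(y,\gamma(t^*))<(d_0-\alpha)+4c=d_0-(\alpha-4c)<d_0,
\]
contradicting $d_0=d(x,\gamma_{0,\ell})$. Hence $d_0\le 4c$, as desired.

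I do not expect a substantive obstacle here: the geometric content, assembled from the $\mu$-bubble diameter estimate and the Cheeger--Colding almost-splitting theorem, is already packaged into Proposition \ref{prop:cover}. The only real bookkeeping is the double inequality $4c<\alpha<5c$, which is forced by the contradiction hypothesis $d_0>4c$ together with the choice of a small $\epsilon\in(0,c)$; everything else is a single application of the triangle inequality.
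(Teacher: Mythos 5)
Your proof is correct and follows essentially the same route as the paper: slide along a minimizing geodesic from $x$ toward $\gamma(\hat{t})$ to land inside the $5c$-tube, invoke Proposition~\ref{prop:cover}, and close with the triangle inequality. One small point in your favor: by choosing the intermediate point at distance $5c-\epsilon$ (rather than exactly $5c$) from $\gamma(\hat{t})$, you keep it strictly inside the open neighborhood $N_{5c}(\gamma_{r_0,\ell-r_0})$, sidestepping a boundary technicality that the paper's version glosses over by placing $\hat{x}$ exactly at distance $5c$.
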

\begin{proof}
    Suppose the assertion is false and $d(x,\gamma(\hat{t}))>4c$ with $\hat{t}\in [r_0,\ell-r_0]$. According to Proposition \ref{prop:cover}, we would then have $d(x,\gamma(\hat{t}))>5c$. By continuity, there exists a point $\hat{x}$ on the geodesic segment joining $x$ to $\gamma(\hat{t})$ where $d(\hat{x},\gamma(\hat{t}))=5c$. Applying Proposition \ref{prop:cover} again, we find $d(\hat{x},\gamma_{0,\ell})\le 4c$. However, since $\hat{x}$ is on the minimizing geodesic between $x$ and $\gamma(\hat{t})$, we must have
    $$4c\ge d(\hat{x},\gamma_{0,\ell})=d(\hat{x},\gamma(\hat{t}))=5c$$
    This is a contradiction. 
\end{proof}
\begin{cor}{\label{cor:large-nbhd}}
     For any $s$ with $\ell>4s+2r_0$, one has
     $$N_{s}(\gamma_{2s+r_0,\ell-2s-r_0})\subset N_{4c}(\gamma_{0,\ell})$$
\end{cor}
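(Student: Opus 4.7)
The plan is to deduce this corollary directly from Proposition \ref{prop:jump} by a triangle-inequality argument along $\gamma$. Given any $x\in N_s(\gamma_{2s+r_0,\ell-2s-r_0})$, I would let $\hat{t}\in[0,\ell]$ realize $d(x,\gamma(\hat{t}))=d(x,\gamma_{0,\ell})$ and aim to apply Proposition \ref{prop:jump} to $x$, which requires verifying $\hat{t}\in[r_0,\ell-r_0]$.

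To this end, first pick $t^\ast\in[2s+r_0,\ell-2s-r_0]$ with $d(x,\gamma(t^\ast))<s$, which exists by the hypothesis on $x$. Since $\gamma(t^\ast)\in\gamma_{0,\ell}$, the minimizing choice of $\hat{t}$ gives $d(x,\gamma(\hat{t}))\le d(x,\gamma(t^\ast))<s$. Because $\gamma$ is a unit-speed minimizing geodesic on $[0,\ell]$, distances between parameter points along $\gamma$ coincide with parameter differences, so the triangle inequality produces
\[
|\hat{t}-t^\ast|=d(\gamma(\hat{t}),\gamma(t^\ast))\le d(x,\gamma(\hat{t}))+d(x,\gamma(t^\ast))<2s.
\]
Combined with $t^\ast\in[2s+r_0,\ell-2s-r_0]$, this forces $\hat{t}\in(r_0,\ell-r_0)\subset[r_0,\ell-r_0]$; this is precisely where the $2s$ buffer at each endpoint of the inner segment gets used.

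With $\hat{t}\in[r_0,\ell-r_0]$ in hand, Proposition \ref{prop:jump} yields $d(x,\gamma(\hat{t}))\le 4c$, so $x\in N_{4c}(\gamma_{0,\ell})$ and the corollary follows. The argument is essentially formal once Proposition \ref{prop:jump} is available; the main thing to keep straight is the bookkeeping of constants, namely that the $2s$ offset at each endpoint is exactly the slack needed so that any nearest-point projection cannot slip outside the window $[r_0,\ell-r_0]$. There is no genuine analytic obstacle beyond this---all of the work has already been absorbed into the $\mu$-bubble construction and the Cheeger--Colding estimates that culminate in Proposition \ref{prop:jump}.
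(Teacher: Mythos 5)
Your proof is correct and follows the same route as the paper: take the nearest parameter $\hat t$ on $\gamma_{0,\ell}$, show $\hat t\in[r_0,\ell-r_0]$, and invoke Proposition \ref{prop:jump}. The paper simply asserts $\hat t\in[r_0,\ell-r_0]$ without spelling out the triangle-inequality bookkeeping that you supply, so your write-up is a slightly more detailed version of the same argument.
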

\begin{proof}
    For $x\in N_{s}(\gamma_{2s+r_0,\ell-2s-r_0})$, let $\hat{t}\in[0,\ell]$ such that $d(x,\gamma(\hat{t}))=d(x,\gamma_{0,\ell})$.
    We must then have $\hat{t}\in [r_0,\ell-r_0]$. The claim now follows directly from Proposition \ref{prop:jump}.
\end{proof}
\begin{cor}{\label{cor:one-end}}
    For $x\notin N_{4c}(\gamma_{0,\ell})$, let $\hat{t}\in[0,\ell]$ such that $d(x,\gamma(\hat{t}))=d(x,\gamma)$.
    If $d(x,\gamma(t))>d(x,\gamma)$ for all $t>\ell-r_0$, then $\hat{t}\le r_0$. Therefore, $\hat{t}\le r_0$ if either of the following conditions holds:
    \begin{itemize}
        \item $d(x,\gamma(0))<\frac{\ell-r_0}{2}$.
        \item $d(x,\gamma(0))<d(q,\gamma(\ell))-r_0$.
    \end{itemize}   
\end{cor}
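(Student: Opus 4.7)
The plan is to use Proposition \ref{prop:jump} to exclude $\hat t$ from the interior range $[r_0, \ell - r_0]$, and then use the tail hypothesis to exclude $\hat t$ from $(\ell - r_0, \ell]$. First, since $x \notin N_{4c}(\gamma_{0,\ell})$ we have $d(x, \gamma(\hat t)) = d(x, \gamma) > 4c$; so if $\hat t \in [r_0, \ell - r_0]$ held, Proposition \ref{prop:jump} would force $d(x, \gamma(\hat t)) \le 4c$, a contradiction. Second, if $\hat t > \ell - r_0$, the stated hypothesis gives $d(x, \gamma(\hat t)) > d(x, \gamma)$, which contradicts the defining property $d(x, \gamma(\hat t)) = d(x, \gamma)$. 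Since $\hat t \in [0, \ell]$, these two exclusions force $\hat t \le r_0$, establishing the main assertion.

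For the two sufficient conditions, I would verify that each implies the tail hypothesis $d(x, \gamma(t)) > d(x, \gamma)$ for every $t > \ell - r_0$, and then invoke the first part. The key is the trivial bound $d(x, \gamma) \le d(x, \gamma(0))$ (since $\gamma(0) \in \gamma_{0,\ell}$) together with two triangle inequalities: $d(x, \gamma(t)) \ge t - d(x, \gamma(0))$ and $d(x, \gamma(t)) \ge d(x, \gamma(\ell)) - (\ell - t)$. Under the first bullet, the first inequality yields $d(x, \gamma(t)) > (\ell - r_0)/2 > d(x, \gamma(0)) \ge d(x, \gamma)$ for any $t > \ell - r_0$. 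Under the second bullet, read as $d(x, \gamma(0)) < d(x, \gamma(\ell)) - r_0$, the second inequality yields $d(x, \gamma(t)) > d(x, \gamma(\ell)) - r_0 > d(x, \gamma(0)) \ge d(x, \gamma)$.

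I do not expect a serious obstacle: the corollary is essentially a packaging of Proposition \ref{prop:jump} with one-sided endpoint control, and the two sufficient conditions are tailored precisely so that the elementary triangle-inequality comparison between $d(x, \gamma(0))$ and $d(x, \gamma(t))$ (for $t$ near $\ell$) closes. The only delicate point is to remember that $\gamma(0)$ is always a competitor for the minimum, which is what makes $d(x, \gamma(0)) \ge d(x, \gamma)$ available at the last step of each verification.
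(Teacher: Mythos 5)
Your proposal is correct and follows essentially the same route as the paper: exclude $\hat t\in[r_0,\ell-r_0]$ via Proposition~\ref{prop:jump}, exclude $\hat t>\ell-r_0$ by the tail hypothesis, and verify each bullet implies the tail hypothesis via triangle inequalities against $\gamma(0)$ and $\gamma(\ell)$, using $d(x,\gamma)\le d(x,\gamma(0))$ to close. You also correctly read the paper's ``$d(q,\gamma(\ell))$'' as a typo for $d(x,\gamma(\ell))$.
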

\begin{proof}
    Since $x\notin N_{4c}(\gamma_{0,\ell})$, Proposition \ref{prop:jump} implies that if $\hat{t}\notin [\ell-r_0,\ell]$, then $\hat{t}$ must be within $[0,r_0]$. Given $t>{\ell}-r_0$, we consider the following scenarios:\\
    Under the first assumption:
    \begin{align*}
        d(x,\gamma(t))\ge \ell-r_0-d(x,\gamma(0))> \frac{\ell-r_0}{2}>d(x,\gamma(0))
    \end{align*}
    Under the second assumption:
    $$d(x,\gamma(t))\ge d(x,\gamma(\ell))-r_0>d(x,\gamma(0))$$
    In both cases, this ensures that $\hat{t}\in [0,r_0]$.
\end{proof}
\section{Proof of Theorem \ref{thm:effective}}
Now, let us fix $p\in M$ and $r>0$. We assume $R_g\ge 1$ within $B_r(p)$. We consider a minimizing geodesic $\gamma:[0,r] \ra M$ with $\gamma(0)=p$. Let $r_0$ be the universal constant from Lemma \ref{lem:ball-around-geodesic}, and without loss of generality, we assume that $r_0>10S$ and $r>32r_0$.   

We define the region
$$U=N_{4c}(\gamma_{0,r})\cup B_{6r_0}(p),\qquad V=B_{\frac{r}{16}}(p)\backslash U$$
\begin{lemma}{\label{lem:first-region}}
    We have $\vol(U)\le C(r_0,c) r$.
\end{lemma}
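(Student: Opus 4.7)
The plan is to bound the two pieces of $U = N_{4c}(\gamma_{0,r}) \cup B_{6r_0}(p)$ separately, in each case via the Bishop--Gromov volume comparison theorem, which applies globally on $M$ since $\operatorname{Ric}_g\ge 0$.

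First I would handle the ball $B_{6r_0}(p)$. Bishop--Gromov immediately gives $\vol(B_{6r_0}(p))\le \omega_3(6r_0)^3$, where $\omega_3$ is the volume of the Euclidean unit 3-ball. This is a constant $C(r_0)$ depending only on $r_0$.

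Next I would cover the tubular neighborhood $N_{4c}(\gamma_{0,r})$ by finitely many small balls centered on $\gamma$. Specifically, set $q_i=\gamma(ic)$ for $i=0,1,\dots,N$ with $N=\lceil r/c\rceil$. Every point of $\gamma_{0,r}$ lies within distance $c$ of some $q_i$, so every point of $N_{4c}(\gamma_{0,r})$ lies within distance $5c$ of some $q_i$; that is,
\[
N_{4c}(\gamma_{0,r})\subset \bigcup_{i=0}^{N} B_{5c}(q_i).
\]
Applying Bishop--Gromov to each ball gives $\vol(B_{5c}(q_i))\le \omega_3(5c)^3$, so summing yields
\[
\vol\bigl(N_{4c}(\gamma_{0,r})\bigr)\le (N+1)\,\omega_3(5c)^3\le C(c)\,r,
\]
where I absorbed the constant using $r>32r_0>c$ (recall $r_0>10S$ and $S=5c+2L$).

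Finally I would combine the two estimates: $\vol(U)\le C(c)r+C(r_0)$. Since $r>32r_0$, the additive constant $C(r_0)$ can be absorbed into a term linear in $r$, giving $\vol(U)\le C(r_0,c)\,r$. There is no real obstacle here; the lemma is essentially a bookkeeping consequence of Bishop--Gromov applied to a cover of a length-$r$ geodesic segment by balls of uniformly bounded radius. All of the actual geometric work (which forced the neighborhood parameter $4c$) has already been done in Proposition~\ref{prop:cover} and the lemmas preceding it.
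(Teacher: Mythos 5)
Your proof is correct and follows essentially the same route as the paper: cover $N_{4c}(\gamma_{0,r})$ by $O(r/c)$ geodesic balls of radius comparable to $c$, bound each via Bishop--Gromov, bound $B_{6r_0}(p)$ directly by Bishop--Gromov, then absorb the additive constant using $r>32r_0$. Your version with centers spaced $c$ apart and radius $5c$ is in fact slightly more careful than the paper's phrasing (which says radius $4c$), but the argument and the resulting constant $C(r_0,c)$ are the same.
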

\begin{proof}
    The tubular neighborhood $N_{4c}(\gamma_{0,r})$ can be covered by $r\slash c$ geodesic balls of radius $4c$. Bishop-Gromov then implies that
    \begin{align*}
        \vol(U)\le \vol\left(N_{4c}(\gamma_{0,r})\right)+\vol(B_{6r_0}(p))\le C(\frac{r}{c}) c^3+Cr_0^3\le Cr
    \end{align*}
    where $C$ depends only on $c$ and $r_0$.
\end{proof}
Let us from now assume that $V\ne\emptyset$. For any $q\in V$, we let $\gamma^q:[0,\ell^q]$ to be the unit speed minimizing geodesic joining $q$ and $\gamma(\frac{r}{4})$. Note that it follows from the triangle inequality that $\frac{3}{16}r\le \ell^q \le \frac{5}{16}r$.
\begin{prop}{\label{prop:outside}}
    For any $q\in V$, there exists some $t^{q}\le\frac{r}{8}$ such that
    $$d(p,\gamma^q)=d(p,\gamma^q(t^{q}))\le 4c.$$
\end{prop}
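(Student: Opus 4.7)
The plan is to combine a triangle-inequality bound on the parameter with Corollary \ref{cor:one-end} and Proposition \ref{prop:jump} applied to $\gamma^q$. Let $\hat t^q \in [0,\ell^q]$ be a parameter realizing $d(p,\gamma^q(\hat t^q)) = d(p,\gamma^q)$.

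First I would bound the parameter $\hat t^q \le r/8$ directly from the triangle inequality. Since $q = \gamma^q(0) \in B_{r/16}(p)$, minimality gives $d(p,\gamma^q(\hat t^q)) \le d(p,q) < r/16$, and combined with $d(p,\gamma(r/4)) = r/4$ together with $d(\gamma^q(\hat t^q),\gamma(r/4)) = \ell^q - \hat t^q$, the triangle inequality yields $\hat t^q \le \ell^q - 3r/16 \le r/8$.

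Next, I would apply Corollary \ref{cor:one-end} to $\gamma^q$ with the point $p$. Using $\ell^q \ge 3r/16$ and $r > 32 r_0$, the first hypothesis $d(p,q) < (\ell^q - r_0)/2$ is satisfied, so $p \notin N_{4c}(\gamma^q)$ would force $\hat t^q \le r_0$. Dually, Proposition \ref{prop:jump} asserts $d(p,\gamma^q) \le 4c$ whenever $\hat t^q \in [r_0, \ell^q - r_0]$; since I already have $\hat t^q \le r/8 < \ell^q - r_0$, the distance bound follows immediately in the regime $\hat t^q \ge r_0$, and combined with the parameter bound this completes the proof in that subcase.

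The main obstacle will be ruling out the residual configuration $\hat t^q < r_0$. Here the triangle inequality yields $d(p,\gamma^q(\hat t^q)) \ge d(p,q) - \hat t^q > 5r_0 > 4c$, so such a configuration would directly contradict the proposition and must be excluded. I would do so via Lemma \ref{lem:separate}: the concatenation of the minimizing geodesic from $p$ to $q$ with $\gamma^q$ is a continuous path from $\gamma(0)$ to $\gamma(r/4)$, hence meets every $\Sigma_k$ with $0 < t_k < r/4$. Because the first segment has length less than $r/16$, for $t_k > r/16 + c$ the intersection must lie on $\gamma^q$, placing $\gamma^q$ within $c$ of $p_k$ and hence within $N_{4c}(\gamma)$. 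Combining this intersection information with the Cheeger--Colding estimates of Lemma \ref{lem:CC} and the diameter bound of Lemma \ref{lem:CLS-step-1} is what I expect to yield the desired contradiction.
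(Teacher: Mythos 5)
Your derivation of the parameter bound $t^{q}\le r/8$ is correct (and a bit cleaner than the paper's, which obtains it only after the distance estimate: you use $d(p,\gamma^q(\hat t^q))\le d(p,q)<r/16$ directly). Your reduction is also sound: if $\hat t^q\ge r_0$ then, since $\hat t^q\le r/8<\ell^q-r_0$, Proposition~\ref{prop:jump} applied to $\gamma^q$ gives $d(p,\gamma^q)\le 4c$; and if $p\notin N_{4c}(\gamma^q)$ then Corollary~\ref{cor:one-end} forces $\hat t^q\le r_0$, in which case $d(p,\gamma^q)\ge d(p,q)-r_0>5r_0>4c$. So everything hinges on ruling out $\hat t^q<r_0$, and there your sketch has a genuine gap.

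The intersection argument you sketch with Lemma~\ref{lem:separate} does produce, for each $k$ with $r/16+c<t_k<r/4$, a parameter $s$ with $\gamma^q(s)\in\Sigma_k$, hence $d(\gamma^q(s),\gamma(t_k))\le c$ and $d(p,\gamma^q(s))\le t_k+c$. But the smallest available $t_k$ is of order $r/16$, so the best distance you can extract this way is $d(p,\gamma^q)\lesssim r/16+O(L+c)$. For $r$ only slightly larger than $32r_0$ this would contradict $d(p,\gamma^q)>5r_0$, but for large $r$ it yields nothing: $r/16$ swamps $5r_0$. Nor does this intersection point land $p$ inside a tube $N_{s}(\gamma^q_{2s+r_0,\ell^q-2s-r_0})$, which is what you would need to invoke Corollary~\ref{cor:large-nbhd} applied to $\gamma^q$. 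What is missing is a point of $\gamma^q$ that is simultaneously within $O(r_0)$ of $p$ \emph{and} a macroscopic parameter distance from both endpoints of $\gamma^q$. The paper produces one by a different mechanism: since $d(q,\gamma)>5c$ while $d(\gamma^q(\ell^q),\gamma)=0$, by continuity there is $\tilde t$ with $d(\gamma^q(\tilde t),\gamma)=5c$ exactly; Corollary~\ref{cor:one-end} applied to $\gamma^q(\tilde t)$ (with respect to the original $\gamma$) places its nearest point on $\gamma$ in $\gamma_{0,r_0}$, so $d(p,\gamma^q(\tilde t))<\frac{3}{2}r_0$; this in turn forces $\tilde t\in[\frac{9}{2}r_0,\ell^q-\frac{9}{2}r_0]$, so $p\in N_{\frac{3}{2}r_0}(\gamma^q_{4r_0,\ell^q-4r_0})$ and Corollary~\ref{cor:large-nbhd} gives $d(p,\gamma^q)\le 4c$. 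That argument is precisely what would close the remaining case in your proof.
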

\begin{proof}
First notice that $d(p,q)<\frac{r-r_0}{2}$ and $q\notin V$, hence Corollary \ref{cor:one-end} implies that
$$d(\gamma,\gamma^q(0))\ge d(p,q)-r_0>5c$$
Since $d(\gamma,\gamma^q(\ell^q))=0$, by continuity there exists some $\Tilde{t}\in [0,\ell^q]$ such that $d(\gamma,\gamma^q(\Tilde{t}))=5c$. It is clear that 
$$\min\{d\left(\gamma^q(\Tilde{t}),\gamma^q(0)\right),d\left(\gamma^q(\Tilde{t}),\gamma^q(\ell^q)\right)\}\le\frac{\ell^q}{2}$$
thus,
\begin{align*}
    d\left(p,\gamma^q(\Tilde{t})\right)\le \frac{\ell^q}{2}+\max\left\{d\left(p,\gamma^q(0)),d(p,\gamma^q(\ell^q)\right)\right\}\le\frac{13}{32}r<\frac{r-r_0}{2}
\end{align*}
Applying Corollary \ref{cor:one-end} again, we find that $\gamma^q(\Tilde{t})\in N_{5c}(\gamma_{0,r_0})$. We therefore conclude that $d(p,\gamma^q(\Tilde{t})) < \frac{3}{2}r_0$. This implies for all $t\in[0,\ell^q]$, we have
\[
    d(p,\gamma^q({t}))\le d(p,\gamma^q(\Tilde{t}))+|t-\tilde{t}| < \frac{3}{2}r_0+|t-\tilde{t}|
\]
But by assumption $d(p,\gamma^q(0)),d(p,\gamma^q(\ell^{q}))\ge 6r_0$, and hence $\tilde{t}\in [\frac{9}{2}r_0,\ell^q-\frac{9}{2}r_0]$ and $p\in N_{\frac{3}{2}r_0}\left(\gamma^q_{4r_0,\ell^q-4r_0}\right)$. Corollary \ref{cor:large-nbhd} then implies that 
$$d(p,\gamma^q(t^{q})):=\min_{t\in[0,\ell^q]} d(p,\gamma^q(t)) \le 4c.$$ 
To control $t^{q}$, we observe that
\[\ell^q-t^{q}\ge d(p,\gamma^q(\ell^q))-d(p,\gamma^q(t^{q}))= \frac{r}{4}-d(p,\gamma^q(t^{q}))\]
But given $\ell^q\le\frac{5}{16}r$ and $d(p,\gamma^q(t^{q}))\le 4c$, 
it follows that $t^{q}\le\frac{r}{8}$.
\end{proof}
Let us now pick some $q_0\in V$
such that $d(q,\gamma(\frac{r}{4}))\le d(q_0,\gamma(\frac{r}{4}))+r_0$ for all $q\in V$.
\begin{prop}
    We have $V\subset  N_{4c}(\gamma^{q_0}_{0,\ell^{q_0}})\cup B_{6r_0}(q_0)$.
\end{prop}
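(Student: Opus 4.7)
The plan is to mimic the proof of Proposition~\ref{prop:outside} with the basepoint $p$ and geodesic $\gamma$ replaced by $q_0$ and $\gamma^{q_0}$. All the $\mu$-bubble and Cheeger--Colding tools of Section~2 apply to any minimizing geodesic in $M$ whose tubular neighborhood lies in $\{R_g\ge 1\}$; in particular they apply to $\gamma^{q_0}$ and to $\gamma^q$ (for any $q\in V$), since these are minimizing of length at least $3r/16$ with images inside $B_{r/2}(p)\subset B_r(p)$. Thus I am free to use the analogs of Proposition~\ref{prop:jump} and Corollary~\ref{cor:large-nbhd} for these auxiliary geodesics, with the same universal constants $r_0$ and $c$.

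Fix $q\in V$. If $d(q,q_0)<6r_0$ there is nothing to prove, so suppose $d(q,q_0)\ge 6r_0$ and argue by contradiction that $d(q,\gamma^{q_0})>4c$. Let $\gamma^{q_0}(\hat{t})$ realize this distance. By the analog of Proposition~\ref{prop:jump}, either $\hat{t}\le r_0$ or $\hat{t}\ge \ell^{q_0}-r_0$; the latter case is excluded because it would force $d(q,\gamma(r/4))\le d(q,q_0)+r_0\le r/8+r_0$, contradicting $d(q,\gamma(r/4))\ge r/4-r/16=3r/16$. So $\hat{t}\le r_0$, and $d(q,\gamma^{q_0})\ge d(q,q_0)-r_0\ge 5r_0>5c$. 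Next, Proposition~\ref{prop:outside} applied to both $q_0$ and $q$ produces $t^{q_0},t^q\le r/8$ with $d(p,\gamma^{q_0}(t^{q_0}))$ and $d(p,\gamma^q(t^q))$ each at most $4c$. Hence $d(\gamma^q(t^q),\gamma^{q_0})\le 8c$; since both endpoints of $\gamma^{q_0}$ lie much farther than $8c$ from $\gamma^q(t^q)$, a further application of the analog of Proposition~\ref{prop:jump} improves this to $d(\gamma^q(t^q),\gamma^{q_0})\le 4c$. The continuous function $t\mapsto d(\gamma^q(t),\gamma^{q_0})$ therefore drops from $\ge 5c$ at $t=0$ to $\le 4c$ at $t=t^q$, and the intermediate value theorem provides $\tilde{t}\in[0,t^q]$ with $d(\gamma^q(\tilde{t}),\gamma^{q_0})=5c$. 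Set $x=\gamma^q(\tilde{t})$; then $d(x,p)\le\tilde{t}+r/16\le 3r/16$ forces $d(x,\gamma(r/4))\ge r/16>3r_0/2$, so invoking the analog of Proposition~\ref{prop:jump} to $x$ with respect to $\gamma^{q_0}$ puts the closest-point parameter in $[0,r_0]$, giving $d(x,q_0)\le 5c+r_0<3r_0/2$.

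From here I would copy the closing steps of Proposition~\ref{prop:outside} verbatim with $(p,\gamma)$ replaced by $(q_0,\gamma^q)$: the triangle bound $d(q_0,\gamma^q(t))<3r_0/2+|t-\tilde{t}|$, together with $d(q_0,q)\ge 6r_0$ and $d(q_0,\gamma(r/4))=\ell^{q_0}\ge 6r_0$, forces $\tilde{t}\in[9r_0/2,\ell^q-9r_0/2]$, and the analog of Corollary~\ref{cor:large-nbhd} then yields $q_0\in N_{4c}(\gamma^q)$; say $d(q_0,\gamma^q(t_0))\le 4c$. Finally, the near-maximality of $q_0$ gives $\ell^q\le \ell^{q_0}+r_0$, and the triangle inequality $\ell^{q_0}\le 4c+(\ell^q-t_0)$ forces $t_0\le r_0+4c$, whence $d(q,q_0)\le t_0+4c<2r_0$, contradicting $d(q,q_0)\ge 6r_0$. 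The main delicate point is the improvement from $8c$ to $4c$ in the second step, which hinges on invoking Proposition~\ref{prop:jump} for the auxiliary geodesic $\gamma^{q_0}$ and ruling out both endpoint possibilities; the rest is a mechanical adaptation of Proposition~\ref{prop:outside}.
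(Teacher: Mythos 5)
Your proof is correct, and it shares the same skeleton as the paper's — suppose $q\in V$ has $d(q,q_0)\ge 6r_0$ and $d(q,\gamma^{q_0})>4c$, deduce (via the one-end/jump corollaries) that $d(q,\gamma^{q_0})\ge 5r_0$, and then use Proposition~\ref{prop:outside} on both $q$ and $q_0$ to show that $\gamma^q$ and $\gamma^{q_0}$ pass near each other close to $p$. Where you diverge is in how you convert that proximity into a contradiction. The paper defines $t^*$ as the first parameter at which $\gamma^q$ enters $N_{r_0}(\gamma^{q_0})$, shows $\gamma^q(t^*)$ lies in $N_{r_0}(\gamma^{q_0}_{0,r_0})$ so $d(q_0,\gamma^q(t^*))\le 2r_0$, and then closes with a single chain of triangle inequalities: $d(q,\gamma(\tfrac{r}{4}))\ge d(q,q_0)+d(q_0,\gamma(\tfrac{r}{4}))-4r_0$ combined with the near-maximality of $q_0$ gives $d(q,q_0)\le 5r_0$. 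You instead first improve $d(\gamma^q(t^q),\gamma^{q_0})\le 8c$ to $\le 4c$ by a second jump application, use IVT at threshold $5c$ to locate $\tilde t$ with $d(q_0,\gamma^q(\tilde t))<\tfrac{3}{2}r_0$, and then \emph{rerun the whole Proposition~\ref{prop:outside} argument with $(q_0,\gamma^q)$ in place of $(p,\gamma)$}, invoking Corollary~\ref{cor:large-nbhd} again to place $q_0\in N_{4c}(\gamma^q)$ at some parameter $t_0$; near-maximality of $q_0$ then bounds $t_0\le r_0+4c$ and gives $d(q,q_0)<2r_0$. Your version is longer and re-enters the almost-splitting machinery a second time, whereas the paper's closing step is purely metric; on the other hand your route is more modular (it literally reuses the earlier proposition) and produces a tighter numerical bound. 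The arithmetic you rely on ($r>64r_0$, $r_0>10S>50c$) checks out at every step, and the two places where you invoke the jump/one-end corollaries for the auxiliary geodesics $\gamma^q,\gamma^{q_0}$ are justified for the same reason the paper uses them there.
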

\begin{proof}
    Suppose the statement is false and there exists some $q\in V$ such that $q\notin  N_{4c}(\gamma^{q_0}_{0,\ell^{q_0}})\cup B_{6r_0}(q_0)$. Let $\hat{t}\in [0,\ell^{q_0}]$ so that $d(q,\gamma^{q_0}(\hat{t}))=d(q,\gamma^{q_0})$. We first observe that
    \[
    d(q_0,q)< 2\cdot\frac{r}{16}<  \ell^q-r_0=d(q,\gamma^{q_0}(\ell^{q_0}))-r_0
    \]
    Then, with $q\notin N_{4c}(\gamma^{q_0}_{0,\ell^{q_0}})$, Corollary \ref{cor:one-end} would imply that $\hat{t}\in[0,r_0]$. This implies:
    $$d(q,\gamma^{q_0})\ge d(q,q_0)-d(q_0,\gamma^{q_0}(\hat{t}))\ge d(q_0,q)-r_0$$
    As we are assuming $q\notin B_{6r_0}(q_0)$, it follows that $d(q,\gamma^{q_0})>5r_0$.
    By continuity, along $\gamma^{q}$ there exists some $t^*\in[0,\ell^q]$ such that
    $$t^*:=\inf\{t\in [0,\ell^q]:d(\gamma^{q}(t),\gamma^{q_0})=r_0\}$$
    By Proposition \ref{prop:outside}, there exists some $t^{q_0},t^q\in[0,\frac{r}{8}]$ satisfying
    $$d(p,\gamma^{q_0}(t^{q_0})) \le 4c,\qquad d(p,\gamma^{q}(t^q))\le 4c$$
    This implies $d(\gamma^{q_0}(t^{q_0}),\gamma^{q}(t^q))\le 8c<r_0$, thus the minimality assumption of $t^*$ gives $t^* \le \frac{r}{8}$. However, this shows that $\ell^{q}-t^*>\frac{r}{16}>2r_0$. Hence for all $t>\ell^{q_0}-r_0$
    \[d(\gamma^{q}(t^*),\gamma^{q_0}(t)))
        \ge  d(\gamma^{q}(t^*),\gamma^{q}(\ell^q))-r_0>r_0\]
    and we must have $\gamma^{q}(t^*)\in N_{r_0}(\gamma^{q_0}_{0,r_0})$ by Corollary \ref{cor:one-end}. In particular, this implies $d(q_0,\gamma^{q}(t^*))\le 2r_0$. Using the triangle inequality
    \begin{align*}
        d\left(q,\gamma(\frac{r}{4})\right)&\ge d(q,q_0)+d\left(q_0,\gamma(\frac{r}{4})\right)-2d\left(q_0,\gamma^{q}(t^*)\right)\\
        &\ge d(q,q_0)+d\left(q_0,\gamma(\frac{r}{4})\right)-4r_0
    \end{align*}
    But the assumption of $q_0$ implies $d\left(q_0,\gamma(\frac{r}{4})\right)\ge d\left(q,\gamma(\frac{r}{4})\right)-r_0$ and we obtain that $d(q,q_0) \le 5r_0$. This contradicts with the assumption that $q\notin B_{6r_0}(q_0)$.
\end{proof}
\begin{cor}{\label{cor:volume}}
    We have $\vol(V)\le Cr$, and hence $\vol(B_{\frac{r}{16}}(p))<Cr$.
\end{cor}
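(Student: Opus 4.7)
The plan is straightforward: combine the containment $V \subset N_{4c}(\gamma^{q_0}_{0,\ell^{q_0}}) \cup B_{6r_0}(q_0)$ from the preceding proposition with Bishop-Gromov volume comparison, which is available because $\text{Ric}_g \ge 0$. Since $\gamma^{q_0}$ is a unit-speed minimizing geodesic of length $\ell^{q_0} \le \tfrac{5}{16} r$, one can cover the tubular neighborhood $N_{4c}(\gamma^{q_0}_{0,\ell^{q_0}})$ by $O(r/c)$ geodesic balls of radius $O(c)$ centered at evenly spaced points along $\gamma^{q_0}$ (say at $\gamma^{q_0}(jc)$ for $j = 0, 1, \dots, \lceil \ell^{q_0}/c \rceil$, using balls of radius $5c$); Bishop-Gromov bounds the volume of each such ball by $C c^3$, so the total contribution from the tubular neighborhood is $O(c^2 r) \le Cr$, since $c$ is a universal constant. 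The residual ball $B_{6r_0}(q_0)$ contributes at most $C r_0^3$ by Bishop-Gromov, and this is absorbed into the $Cr$ bound using the standing hypothesis $r > 32 r_0$ from the proof of Theorem \ref{thm:effective}. Summing yields $\vol(V) \le Cr$.

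The second assertion is then immediate from the decomposition $B_{r/16}(p) \subset U \cup V$, which holds by the definition of $V$: combining Lemma \ref{lem:first-region} with the bound on $V$ just established gives $\vol(B_{r/16}(p)) \le \vol(U) + \vol(V) \le Cr$. There is no real obstacle here; all the geometric content — namely confining the ``exterior'' piece $V$ to a neighborhood of a single auxiliary minimizing geodesic $\gamma^{q_0}$ via the $\mu$-bubble construction and the Cheeger-Colding almost-splitting theorem — was already discharged in the preceding proposition, and the corollary itself reduces to a routine application of volume comparison to a tubular neighborhood of a curve whose length is linear in $r$.
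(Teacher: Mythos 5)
Your proposal is correct and follows the same route as the paper, which simply states that the corollary ``follows from the same argument as the proof of Lemma~\ref{lem:first-region}'': cover the tubular neighborhood of $\gamma^{q_0}$ by $O(r/c)$ balls of radius $O(c)$, bound each by Bishop--Gromov, absorb $B_{6r_0}(q_0)$, then combine with Lemma~\ref{lem:first-region} via $B_{r/16}(p)\subset U\cup V$. You have merely written out explicitly what the paper leaves implicit.
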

\begin{proof}
    This follows from the same argument as the proof of Lemma \ref{lem:first-region}.
\end{proof}
Now, we will prove the main result of this note.
\begin{proof}[Proof of Theorem \ref{thm:effective}]
    Given $p\in M$ and $r>0$, we suppose that $r>64r_0$. Otherwise, we have $\vol(B_{r}(p))\le Cr_0^3\le C r_0$. We can also assume that there exists a unit speed minimizing geodesic $\gamma:[0,r]\to M$ with $\gamma(0)=p$. It then follows from Corollary \ref{cor:volume} and the Bishop-Gromov volume comparison theorem again
    $$\vol(B_r(p))\le C\vol(B_{\frac{r}{16}}(p))\le Cr$$
    This proves Theorem \ref{thm:effective}.
\end{proof}

\end{document}